\newtheorem{theorem}{Theorem}[section]
\newtheorem{lemma}[theorem]{Lemma}
\newtheorem{proposition}[theorem]{Proposition}
\newtheorem{corollary}[theorem]{Corollary}
\theoremstyle{definition}
\newtheorem{definition}[theorem]{Definition}
\theoremstyle{remark}
\newtheorem{remark}[theorem]{Remark}
\numberwithin{equation}{section}
\begin{document}

\title[On $\mathcal{AN}$-operators]{ On Absolutely Norm attaining operators}

\author[G. Ramesh]{G. Ramesh}

\address{Department of Mathematics\\I. I. T. Hyderabad, \\E-Block, 305, Kandi (V)\\ Sangareddy (M). Medak (Dist)\\  Telangana, India-502 285.}
\email{rameshg@iith.ac.in}

\author[D. Venku Naidu]{D. Venku Naidu}

\address{Department of Mathematics\\I. I. T. Hyderabad, \\E-Block, 307, Kandi (V)\\ Sangareddy (M). Medak (Dist)\\  Telangana, India-502 285.}
\email{venku@iith.ac.in}
\subjclass{Primary 47A75; Secondary 47A10}

\keywords{compact operator, norm attaining operator, $\mathcal{AN}$-operator, Fredholm operator, Fuglede theorem, Weyl's theorem}
\date{\today}

\begin{abstract}
We give necessary and sufficient conditions for a bounded operator defined between complex Hilbert spaces to be absolutely norm attaining. We discuss  structure of such operators in the case of self-adjoint and normal operators separately.
Finally, we discuss several properties of absolutely norm attaining operators.

\end{abstract}
\maketitle
\section{ Introduction and Preliminaries}
The class of absolutely norm attaining operators (shortly, $\mathcal{AN}$-operators) between complex Hilbert spaces were introduced  and discussed several important class of examples and properties of these operators  by Carvajal and Neves  in \cite{carvajalneves}. Later, a structure of these operators on separable Hilbert spaces is proposed in \cite{rameshstructurethm}. But, an  example of $\mathcal{AN}$-operator which does not fit into the characterization of \cite{rameshstructurethm} is given in \cite{SP} and the authors discussed the structure of positive $\mathcal{AN}$- operators between arbitrary Hilbert spaces. In this article, first, we give  necessary and sufficient conditions for an operator to be \textit{positive and $\mathcal{AN}$}. In fact, we show that  a bounded operator $T$ defined on an infinite dimensional Hilbert space is positive and $\mathcal{AN}$ if and only if there exists a unique triple $(K,F,\alpha)$, where $K$ is a positive compact operator, $F$ is a positive finite rank operator, $\alpha$ is positive real number such that  $T=K-F+\alpha I$ and $KF=0,\; F\leq \alpha I$ (See Theorem \ref{sharpcharacterization}). In fact, here $\alpha=m_e(T)$, the essential minimum modulus of $T$.  This is an improvement of \cite[Theorem 5.1]{SP}. Using this result,  we give explicit structure of self-adjoint  and $\mathcal{AN}$-operators as well as   normal and  $\mathcal{AN}$-operators. Finally, we also obtain structure of general $\mathcal{AN}$-operators. In the process we also prove several important properties of $\mathcal{AN}$-operators. All these results are new.

We organize the article as follows: In the remaining part of this section we explain the basic terminology,  notations and  necessary results that will be needed for proving main theorems. In the second section we give a  characterization of positive $\mathcal{ AN}$-operators and prove several important properties. In the third section we discuss the structure of self-adjoint and normal $\mathcal{AN}$-operators and in the fourth section we discuss about general $\mathcal{AN}$-operators.

Throughout the article we consider complex Hilbert spaces which will be denoted by $H,H_1,H_2$ etc.  The inner product and the
induced norm are denoted  by  $\langle, \rangle$ and $\|\cdot \|$ respectively. The unit sphere  of a closed subspace $M$ of $H$ is denoted by $S_M:={\{x\in M: \|x\|=1}\}$ and  $P_M$ denote the orthogonal projection $P_M:H\rightarrow H$ with range $M$. The identity operator on $M$ is denoted by $I_M$.

A linear operator $T: H_1\rightarrow H_2$ is said to be bounded if there exists a $k>0$ such that $\|Tx\|\leq k \|x\|$ for all $x\in H_1$.  If $T$ is bounded, the quantity $\|T\|=\sup{\{\|Tx\|: x\in S_{H_1}}\}$ is finite and is called the norm of $T$. We denote the space of all bounded linear operators between $H_1$ and $H_2$  by $\mathcal B(H_1,H_2)$. In case if $H_1=H_2=H$, then $\mathcal
B(H_1,H_2)$ is denoted by $\mathcal B(H)$.  For $T\in \mathcal B(H_1,H_2)$, there exists a unique operator denoted by $T^*:H_2\rightarrow H_1$ satisfying
 \begin{equation*}
 \langle Tx,y\rangle =\langle x, T^*y\rangle \; \text{for all}\; x\in H_1 \; \text{and} \; \text{for all}\; y\in H_2.
 \end{equation*}
 This operator $T^*$ is called the adjoint of $T$.  The null space and the range spaces of $T$ are denoted by $N(T)$ and $R(T)$ respectively.

Let $T\in \mathcal B(H)$. Then $T$ is said to be \textit{normal} if $T^*T=TT^*$, \textit{self-adjoint} if $T=T^*$. If $T$ is self-adjoint and $\langle Tx,x\rangle \geq 0$ for all $x\in H$, then $T$ is called \textit{positive}. It is well known that for a positive operator $T$, there exists a unique  positive operator $S\in \mathcal B(H)$ such that $S^2=T$. We write $S=T^{\frac{1}{2}}$  and is called as the \textit{positive square root} of $T$.

If $S,T\in \mathcal B(H)$ are self-adjoint and $S-T\geq 0$, then we write this by $S\geq T$.

If $P\in \mathcal B(H)$ is such that $P^2=P$, then $P$ is called a \textit{projection}. If $N(P)$ and $R(P)$ are orthogonal to each other, then $P$ is called an \textit{orthogonal projection}. It is a well known  fact  that a projection  $P$ is an orthogonal projection if and only if it is self-adjoint if and only if it is normal.

We call an operator $V\in \mathcal B(H_1,H_2)$ to be an \textit{isometry} if $\|Vx\|=\|x\|$ for each $x\in H_1$. An operator $V\in \mathcal B(H_1,H_2)$ is said to be a \textit{partial isometry} if $V|_{N(V)^{\bot}}$ is an isometry. That is $\|Vx\|=\|x\|$ for all $x\in N(V)^{\bot}$. If $V\in \mathcal B(H)$ is isometry and onto, then $V$ is said to be a \textit{unitary operator}.


In general, if $T\in \mathcal B(H_1,H_2)$, then $T^*T\in \mathcal B(H_1)$ is positive and $|T|:=(T^*T)^{\frac{1}{2}}$ is called the \textit{modulus} of $T$. In fact, there exists a unique partial isometry $V\in \mathcal B(H_1,H_2)$ such that
$T=V|T|$ and $N(V)=N(T)$. This factorization is called the \textit{polar decomposition} of $T$.

 If $T\in \mathcal B(H)$, then $T=\frac{T+T^*}{2}+i(\frac{T-T^*}{2i})$. The operators $Re(T):=\frac{T+T^*}{2}$ and $Im(T):=\frac{T-T^*}{2i}$ are self-adjoint and called the \textit{real} and the \textit{imaginary} parts of $T$ respectively.

A closed subspace $M$ of $H$ is said to be \textit{invariant} under $T\in \mathcal B(H)$ if $TM\subseteq M$ and \textit{reducing} if both $M$ and $M^\bot$ are invariant under $T$.

For $T\in \mathcal B(H)$, the set $$\rho(T):={\{\lambda \in \mathbb C: T-\lambda I:H\rightarrow H\; \text{ is invertible and}\;  (T-\lambda I)^{-1}\in \mathcal B(H) }\}$$ is called the resolvent set and the complement $\sigma(T)=\mathbb C\setminus \rho(T)$ is called the \textit{spectrum} of $T$. It is well known that $\sigma(T)$ is a non empty compact subset of $\mathbb C$. The point spectrum of $T$ is defined by
\begin{equation*}\sigma_p(T)={\{\lambda \in \mathbb C: T-\lambda I \; \text{is not one-to-one}}\}.
\end{equation*}
 Note that $\sigma_{p}(T)\subseteq \sigma(T)$.

A self-adjoint operator  $T\in \mathcal B(H)$  is positive if and only if  $\sigma(T)\subseteq [0,\infty)$.

If $T\in \mathcal B(H_1,H_2)$, then $T$ is said to be \textit{compact} if for every  bounded set $S$ of $H_1$, the set $T(S)$ is pre-compact in $H_2$. Equivalently, for every bounded sequence $(x_n)$ of $H_1$, $(Tx_n)$ has a convergent subsequence in $H_2$. We denote the set of all compact operators between $H_1$ and $H_2$ by $\mathcal K(H_1,H_2)$. In case if $H_1=H_2=H$, then $\mathcal K(H_1,H_2)$ is denoted by $\mathcal K(H)$.

A bounded linear operator $T:H_1\rightarrow H_2$ is called \textit{finite rank} if $R(T)$ is finite dimensional. The space of all finite rank operators between $H_1$ and $H_2$ is denoted by $\mathcal F(H_1,H_2)$ and we write $\mathcal F(H,H)=\mathcal F(H)$.

All the above mentioned basics of operator theory can be found in \cite{rudin,jbconway,halmosproblembook,taylorlay}.

An operator $T\in \mathcal B(H_1,H_2)$ is said to be \textit{norm attaining} if there exists a $x\in S_{H_1}$ such that $\|Tx\|=\|T\|$.  We denote the class of norm attaining operators by $\mathcal N(H_1,H_2)$. It is known that $\mathcal N(H_1,H_2)$ is dense in $\mathcal B(H_1,H_2)$ with respect to the operator norm of $\mathcal B(H_1,H_2)$. We refer \cite{enfloetal} for more details on this topic.

We say $T\in \mathcal B(H_1,H_2)$ to be  \textit{absolutely norm attaining}  or $\mathcal {AN}$-operator (shortly), if  $T|_M$,  the restriction of $T$ to $M $,  is norm attaining for every non zero closed subspace $M$ of $H_1$. That is $T|_M\in \mathcal N(M, H_2)$ for every non zero closed subspace $M$ of $H_1$ \cite{carvajalneves}. This class contains $\mathcal K(H_1,H_2)$, and  the class of partial isometries with finite dimensional null space or finite dimensional range space.

We have the following characterization of norm attaining operators:
\begin{proposition}\label{normattainingeigenvalue}\cite[Proposition 2.4]{carvajalneves}
Let $T\in \mathcal B(H)$ be self-adjoint. Then
\begin{enumerate}
\item $T\in \mathcal N(H)$ if and only if either $\|T\|\in \sigma_p(T)$ or $-\|T\|\in \sigma_p(T)$
\item if $T\geq 0$, then $T\in \mathcal N(H)$ if and only if $\|T\|\in \sigma_p(T)$.
\end{enumerate}
\end{proposition}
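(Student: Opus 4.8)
The plan is to reduce both parts to a single elementary fact about positive operators and then exploit the factorization $T^2-\|T\|^2I=(T-\|T\|I)(T+\|T\|I)$. I begin with the easy implications, which are common to (1) and (2). If $\|T\|\in\sigma_p(T)$, there is a unit vector $x$ with $Tx=\|T\|x$, whence $\|Tx\|=\|T\|$ and $T\in\mathcal N(H)$; the case $-\|T\|\in\sigma_p(T)$ is identical. So the whole content lies in the forward implication, namely that norm attainment forces $\pm\|T\|$ to be an eigenvalue.

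For the forward direction in (1), suppose $T$ attains its norm at $x\in S_H$, so that $\|Tx\|=\|T\|$. The key move is to pass to $A:=T^2=T^*T$, which is positive with $\|A\|=\|T\|^2$, and to observe that
\begin{equation*}
\langle Ax,x\rangle=\langle T^2x,x\rangle=\langle Tx,Tx\rangle=\|Tx\|^2=\|T\|^2=\|A\|.
\end{equation*}
Thus $x$ is a unit vector at which the positive operator $A$ satisfies $\langle Ax,x\rangle=\|A\|$. I would then establish the elementary lemma that any such $x$ is an eigenvector of $A$ for the eigenvalue $\|A\|$, by completing the square and using $\|Ax\|\leq\|A\|$:
\begin{equation*}
\|Ax-\|A\|x\|^2=\|Ax\|^2-2\|A\|\langle Ax,x\rangle+\|A\|^2\leq\|A\|^2-2\|A\|^2+\|A\|^2=0.
\end{equation*}
Hence $T^2x=\|T\|^2x$.

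With $T^2x=\|T\|^2x$ in hand, I factor $(T-\|T\|I)(T+\|T\|I)x=(T^2-\|T\|^2I)x=0$ and split into cases. If $y:=(T+\|T\|I)x=0$, then $Tx=-\|T\|x$ and $-\|T\|\in\sigma_p(T)$. Otherwise $y\neq 0$ and $(T-\|T\|I)y=0$, so $Ty=\|T\|y$ and $\|T\|\in\sigma_p(T)$. Either way one of $\pm\|T\|$ is an eigenvalue, which proves (1). For part (2), if in addition $T\geq 0$ then $\sigma_p(T)\subseteq\sigma(T)\subseteq[0,\infty)$, so $-\|T\|$ can be an eigenvalue only in the degenerate case $\|T\|=0$ (where $T=0$ and $0\in\sigma_p(T)$ trivially); thus the dichotomy of (1) collapses to $\|T\|\in\sigma_p(T)$, as claimed.

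The only step demanding genuine care is the positive-operator lemma $\langle Ax,x\rangle=\|A\|\Rightarrow Ax=\|A\|x$; everything else is bookkeeping and the spectral sign constraint. Its validity rests on self-adjointness entering through $A=T^*T=T^2$, so that $A$ is positive and $\|A\|=\|T\|^2$, which is exactly what allows the completing-the-square estimate to close. I would treat the trivial case $T=0$ separately if needed, although it is absorbed uniformly since then every assertion holds vacuously.
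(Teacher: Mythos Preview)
Your argument is correct. The completing-the-square lemma for positive operators is sound (self-adjointness of $A$ makes $\langle Ax,x\rangle$ real, and $\|Ax\|\le\|A\|$ closes the estimate), and the factorization $(T-\|T\|I)(T+\|T\|I)x=0$ cleanly yields the dichotomy. Note that the paper does not supply its own proof of this proposition: it is quoted from \cite[Proposition~2.4]{carvajalneves} and used as a background tool, so there is nothing to compare against here.
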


%

For $T\in \mathcal B(H_1,H_2)$, the quantity
\begin{equation*}
m(T):=\inf{\{\|Tx\|: x\in S_{H_1}}\}
\end{equation*}
is called the minimum modulus of $T$.
If $H_1=H_2=H$ and $T^{-1}\in \mathcal B(H)$, then $m(T)=\dfrac{1}{\|T^{-1}\|}$ (see \cite[Theorem 1]{bouldin} for details).

The following definition is available in \cite{schmudgen} for densely defined closed operators (not necessarily bounded) on a Hilbert space, and this holds true automatically for bounded operators.
\begin{definition}\cite[Definition 8.3 page 178]{schmudgen}
Let $T=T^*\in \mathcal B(H)$. Then the \textit{discrete spectrum} $\sigma_d(T)$ of $T$ is defined as the set of all eigenvalues of $T$ with finite multiplicities which are isolated points of the spectrum $\sigma(T)$ of $T$. The complement set $\sigma_{ess}(T)=\sigma(T)\setminus \sigma_d(T)$ is called the \textit{essential spectrum} of $T$.
\end{definition}

By the Weyl's theorem we can assert that if $T=T^*$ and $K=K^*\in \mathcal K(H)$, then $\sigma_{ess}(T+K)=\sigma_{ess}(T)$ (see \cite[Corollary 8.16, page 182]{schmudgen} for details). If $H$ is a separable Hilbert space, the \textit{essential minimum modulus} of $T$ is defined to be $m_e(T):=\inf{\{\lambda: \lambda \in \sigma_{ess}(|T|)}\}$ (see  \cite{bouldin} for details). The same result in the general case is dealt in \cite[Proposition 2.1]{Feshchenko}.

Let $H=H_1\oplus H_2$ and $T\in \mathcal B(H)$. Let $P_j:H\rightarrow H$ be an  orthogonal projection onto $H_j$ for $j=1,2$. Then
                                                    $ T=\left(
                                                       \begin{array}{cc}
                                                         T_{11} &T_{12} \\
                                                         T_{21} &T_{22} \\
                                                       \end{array}
                                                     \right)$,
                                                     where $T_{ij}:H_j\rightarrow H_i$ is the operator given by $T_{ij}=P_iTP_j|_{H_j}$.  In particular, $T(H_1)\subseteq H_1$ if and only if $T_{12}=0$. Also, $H_1$ reduces $T$ if and only if
                                                     $T_{12}=0=T_{21}$ (for details see \cite{taylorlay,jbconway}.

\section{Positive $\mathcal{AN}$-operators}

In this section we describe the structure of  operators which are  positive and satisfy the $\mathcal{AN}$-property. First, we recall results which are  necessary for proving our results.

\begin{theorem}\cite[Theorem 5.1]{SP}\label{structurethm2nd}
Let $H$ be a complex Hilbert space of arbitrary dimension and let $P$ be a positive operator
on $H.$ Then $P$ is an $\mathcal{AN}$- operator iff $P$ is of the form $P = \alpha I + K + F$,
where $\alpha\geq 0$, $K$ is a positive compact operator and $F$ is self-adjoint finite
rank operator.
\end{theorem}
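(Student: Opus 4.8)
The plan is to prove both implications. Throughout set $\alpha=m_e(P)=\inf\sigma_{ess}(P)\geq 0$ (recall $|P|=P$ since $P\geq 0$), and let $E(\cdot)$ denote the spectral measure of the self-adjoint operator $P$. Every spectral subspace $E(S)H$ reduces $P$, and the restriction of $P$ to a reducing subspace is again $\mathcal{AN}$, so Proposition \ref{normattainingeigenvalue} applies to each such restriction.

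\textbf{Sufficiency.} Suppose $P=\alpha I+K+F$ with $\alpha\geq 0$, $K\geq 0$ compact and $F=F^*$ finite rank. Put $C:=K+F$, a compact self-adjoint operator. The first point is that $C$ has only finitely many negative eigenvalues: since $K\geq 0$ we have $C\geq F$, so any subspace on which $C$ is negative definite is one on which $F$ is negative definite, hence of dimension at most $\operatorname{rank}(F)$; by the min-max principle $C$ has at most $\operatorname{rank}(F)$ negative eigenvalues. Now fix a nonzero closed subspace $M$. For $x\in S_M$, using $P=P^*$ I would write $\|Px\|^2=\langle P^2x,x\rangle=\alpha^2+\langle C'x,x\rangle$, where $C':=P^2-\alpha^2 I=2\alpha C+C^2$ is compact and self-adjoint. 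Because the nonzero eigenvalues of $C'$ are $\mu(2\alpha+\mu)$ as $\mu$ runs over the eigenvalues of $C$, and $P\geq 0$ forces $\mu\geq-\alpha$ (so $2\alpha+\mu\geq\alpha\geq 0$), the negative eigenvalues of $C'$ correspond exactly to those of $C$; in particular $C'$ too has only finitely many negative eigenvalues. The compression $B:=P_MC'|_M\in\mathcal B(M)$ is compact and self-adjoint, and by the same min-max comparison it inherits finitely many negative eigenvalues. Since $\sup_{x\in S_M}\|Px\|^2=\alpha^2+\max\sigma(B)$, it suffices to see that $\max\sigma(B)$ is attained. When $\dim M<\infty$ this is trivial; when $\dim M=\infty$ one has $0\in\sigma(B)$, so $\max\sigma(B)\geq 0$, and either $\max\sigma(B)>0$, whence it is a positive eigenvalue of the compact operator $B$ and is attained, or $\max\sigma(B)=0$, whence $B\leq 0$ has nontrivial kernel (finitely many negative eigenvalues on an infinite-dimensional space) and the value $0$ is attained. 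Thus $P|_M$ attains its norm for every $M$, i.e.\ $P\in\mathcal{AN}$.

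\textbf{Necessity.} Conversely, assume $P\geq 0$ is $\mathcal{AN}$. I would first pin down the spectrum. Apply Proposition \ref{normattainingeigenvalue}(2) to the reducing subspace $E([0,\alpha))H$: if it were infinite-dimensional the eigenvalues of $P$ below $\alpha$ would accumulate, and since they cannot accumulate below $\alpha$ (that would place a point of $\sigma_{ess}$ strictly below $\alpha=\inf\sigma_{ess}$) they must accumulate at $\alpha$, forcing $\|P|_{E([0,\alpha))H}\|=\alpha\notin\sigma_p\bigl(P|_{E([0,\alpha))H}\bigr)$ and contradicting norm attainment. Hence $E([0,\alpha))H$ is finite-dimensional; set $F:=(P-\alpha I)|_{E([0,\alpha))H}$, extended by $0$, a self-adjoint finite rank operator. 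The crux, addressed below, is to show $\sigma_{ess}(P)=\{\alpha\}$. Granting this, every point of $\sigma(P)\cap(\alpha,\infty)$ lies in $\sigma_d(P)$, i.e.\ is an isolated eigenvalue of finite multiplicity, and such points can accumulate only at the single essential value $\alpha$; therefore $K:=(P-\alpha I)|_{E((\alpha,\infty))H}$, extended by $0$, is a positive operator whose spectrum is a sequence of finite-multiplicity eigenvalues decreasing to $0$, hence a positive compact operator. Since $E([0,\alpha))H\oplus E(\{\alpha\})H\oplus E((\alpha,\infty))H=H$ reduces $P$ and $P=\alpha I$ on the middle summand, assembling the three pieces yields $P=\alpha I+K+F$ with the required properties.

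\textbf{Main obstacle.} The essential step is proving that no point of $\sigma_{ess}(P)$ exceeds $\alpha$, and I expect this to be the hardest part. Suppose, for contradiction, that $c\in\sigma_{ess}(P)$ with $c>\alpha$. Since both $c$ and $\alpha$ lie in $\sigma_{ess}(P)$, I can choose mutually orthogonal orthonormal systems $(f_n)$ and $(g_n)$, with $f_n$ drawn from the infinite-dimensional spectral subspaces of $P$ for shrinking windows about $c$ and $g_n$ from those about $\alpha$, so that $\|Pf_n-cf_n\|\to 0$ and $\|Pg_n-\alpha g_n\|\to 0$. The idea is to form unit vectors $w_n=\cos\theta_n\,f_n+\sin\theta_n\,g_n$ with $\theta_n\to 0$ chosen just large enough that $\|Pw_n\|<c$ for every $n$ while $\|Pw_n\|\to c$; on $M=\overline{\operatorname{span}}\{w_n\}$ one then expects $\|P|_M\|=c$ with the supremum unattained, contradicting the $\mathcal{AN}$ property. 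Carrying this out rigorously is delicate: one must control the off-diagonal inner products $\langle Pw_n,Pw_m\rangle$ (which vanish when the $f_n,g_n$ are genuine eigenvectors but only decay when they are mere approximate eigenvectors), so that $\|P\sum a_n w_n\|^2=\sum_n|a_n|^2\|Pw_n\|^2+(\text{cross terms})$ stays strictly below $c^2$ on the whole unit sphere of $M$. Making the spectral windows shrink fast enough to dominate the cross terms against the comparably small deficits $c^2-\|Pw_n\|^2$ is the technical heart of the argument; once $\sigma_{ess}(P)=\{\alpha\}$ is secured, the assembly above completes the proof.
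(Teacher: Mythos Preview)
The paper does not supply its own proof of this theorem: it is quoted from \cite[Theorem~5.1]{SP} and used as a black box (together with Theorem~\ref{SPthm}, also quoted from \cite{SP}) in the proof of Theorem~\ref{sharpcharacterization}. There is therefore nothing in the present article to compare your proposal against directly.

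On its own terms your argument is largely sound. The sufficiency direction is complete and correct; passing to $C'=P^2-\alpha^2 I$, observing that the compression $P_MC'|_M$ is compact self-adjoint with only finitely many negative eigenvalues, and concluding that $\max\sigma(P_MC'|_M)$ is attained is a clean way to handle all closed subspaces at once. For necessity, your reduction of everything to the single claim $\sigma_{ess}(P)=\{\alpha\}$ is exactly right, and the assembly of $K$ and $F$ from the spectral projections $E([0,\alpha))$, $E(\{\alpha\})$, $E((\alpha,\infty))$ is correct once that claim is in hand.

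The one genuine gap, which you yourself flag, is the proof that $\sigma_{ess}(P)$ contains no point above $\alpha$. Your sketch---mix an approximate eigen-sequence near $c>\alpha$ with one near $\alpha$ to build a subspace $M$ on which $\|P|_M\|=c$ is approached but not attained---is the natural idea, but the cross-term control is not supplied, and as written the argument is not yet a proof. In \cite{SP} this step is organised differently: one first shows (Theorem~\ref{SPthm} here) by an inductive orthogonal-complement argument that a positive $\mathcal{AN}$-operator is diagonalisable and that every nonempty subset of its eigenvalue set has a maximum; from this the statements ``at most one limit point'' and ``at most one eigenvalue of infinite multiplicity'' follow, which together amount exactly to $\sigma_{ess}(P)=\{\alpha\}$. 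That route trades your analytic Weyl-sequence estimate for a simpler order-theoretic observation, and is easier to make rigorous; your direct approach would be more self-contained if completed, but the promised estimate on the off-diagonal terms still needs to be written out.
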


 \begin{theorem}\cite[Theorem 3.8]{SP}\label{SPthm} Let $T\in B(H)$ be positive and $T\in \mathcal {AN}(H)$. Then
\begin{equation}
T=\displaystyle \sum_{\alpha \in \Lambda} \beta_{\alpha}v_{\alpha}\otimes v_{\alpha},
\end{equation}
where ${\{v_{\alpha}:\alpha\in \Lambda}\}$ is an orthonormal basis consisting of entirely eigenvectors of $T$ and for every $\alpha\in \Lambda$, $Tv_{\alpha}=\beta_{\alpha}v_{\alpha}$ with $\beta_{\alpha}\geq 0$ such that
\begin{enumerate}
\item for every non empty set $\Gamma$ of $\Lambda$, we have
\begin{equation*}
\sup{\{\beta_{\alpha}:\alpha\in \Gamma}\}=\max{\{\beta_{\alpha}:\alpha \in \Gamma}\}
\end{equation*}
\item\label{uniqueltpt-SP} the spectrum $\sigma(T)=\overline{{\{\beta_{\alpha}:\alpha \in \Lambda}\}}$ has at most one limit point. Moreover, this unique limit point (if exists) can only be the limit of an increasing sequence in the spectrum
\item \label{uniqueeigenvalueinfinitemult}the set ${\{\beta_{\alpha}:\alpha \in \Lambda}\}$ of eigenvalues of $T$, without counting multiplicities, is countable and has atmost one eigenvalue with infinite multiplicity
\item \label{ltptinfiniteeigevalue} if  $\sigma(T)$ has  both, a limit point and an eigenvalue with infinite multiplicity, then they must be same.
\end{enumerate}
(Here $(v_{\alpha}\otimes v_{\alpha})(x)=\langle x,v_{\alpha}\rangle v_{\alpha}$ for each $\alpha \in \Lambda$ and for each  $x\in H$).
\end{theorem}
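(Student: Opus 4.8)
The cleanest route is to build directly on the structure already recorded in Theorem \ref{structurethm2nd}. Since $T$ is positive and $\mathcal{AN}$, that theorem lets me write $T=\alpha_0 I+C$ with $\alpha_0\geq 0$ and $C=K+F$ compact and self-adjoint. Applying the spectral theorem for compact self-adjoint operators to $C$ produces an orthonormal basis $\{v_\alpha:\alpha\in\Lambda\}$ of $H$ consisting of eigenvectors of $C$, and each such $v_\alpha$ is then automatically an eigenvector of $T$, say $Tv_\alpha=\beta_\alpha v_\alpha$; positivity of $T$ forces $\beta_\alpha\geq 0$, and the resulting diagonalization is exactly $T=\sum_{\alpha\in\Lambda}\beta_\alpha\, v_\alpha\otimes v_\alpha$. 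Two standard facts about $C$ will drive everything that follows: its nonzero eigenvalues are countably many, each of finite multiplicity, and they can accumulate only at $0$. (By Weyl's theorem $\sigma_{ess}(T)=\sigma_{ess}(\alpha_0 I)=\{\alpha_0\}$, so that $\alpha_0=m_e(T)$, which explains the role of $\alpha_0$.)

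The heart of the matter is item (1), and I would prove it by testing the $\mathcal{AN}$-property on the invariant subspace cut out by $\Gamma$. Fix a nonempty $\Gamma\subseteq\Lambda$ and set $M=\overline{\operatorname{span}}\{v_\alpha:\alpha\in\Gamma\}$. Because the $v_\alpha$ are eigenvectors, $M$ is $T$-invariant (indeed reducing), and $T|_M$ acts diagonally on the orthonormal basis $\{v_\alpha:\alpha\in\Gamma\}$ of $M$; consequently $\|T|_M\|=\sup\{\beta_\alpha:\alpha\in\Gamma\}$, while its point spectrum is exactly $\sigma_p(T|_M)=\{\beta_\alpha:\alpha\in\Gamma\}$ (a limiting value of the diagonal that is not itself attained belongs to the spectrum but not to the point spectrum). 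Since $T$ is $\mathcal{AN}$, the restriction $T|_M$ is norm attaining, and as $T|_M\geq 0$, Proposition \ref{normattainingeigenvalue}(2) yields $\|T|_M\|\in\sigma_p(T|_M)$. Hence $\sup\{\beta_\alpha:\alpha\in\Gamma\}$ equals some $\beta_\alpha$ with $\alpha\in\Gamma$, i.e.\ the supremum is attained, which is precisely (1).

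The remaining items fall out by combining (1) with the compactness of $C$. Compactness confines the accumulation of $\{\beta_\alpha\}$ to the single value $\alpha_0$, so $\sigma(T)=\overline{\{\beta_\alpha:\alpha\in\Lambda\}}$ has at most one limit point. For the monotonicity clause I would argue that (1) forbids any strictly increasing sequence of eigenvalues, since such a set has supremum equal to its limit that is never attained; in particular $C$ cannot have infinitely many negative eigenvalues (these would increase to $0$ and push eigenvalues of $T$ up to $\alpha_0$ from below), so the accumulation is carried by the positive eigenvalues of $C$ decreasing to $0$, and the limit point $\alpha_0$ is therefore approached monotonically from above. For (3), countability of $\{\beta_\alpha\}$ is inherited from $\sigma(C)$, and since a nonzero eigenvalue of a compact self-adjoint operator has finite multiplicity, the only eigenvalue of $T$ that can have infinite multiplicity is $\alpha_0$ (the one coming from $\ker C$); thus at most one such eigenvalue exists. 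Finally (4) is immediate, because both the unique limit point of (2) and the unique possible infinite-multiplicity eigenvalue of (3) have been identified as $\alpha_0$, so if both occur they must coincide.

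The step I expect to be the real obstacle is (1): one must check carefully that passing to the invariant subspace $M$ preserves both the operator norm and the point spectrum in the exact form used, and that Proposition \ref{normattainingeigenvalue}(2) is applied to $T|_M$ as a positive operator on $M$ rather than on $H$ (legitimate because $M$ reduces $T$, so $T(M)\subseteq M$). A secondary delicacy is that $H$ may be non-separable, so the word ``sequence'' in (2) should be read through the countability of the nonzero part of $\sigma(C)$, while the eigenspace $\ker C$ of $\alpha_0$ is allowed to be non-separable; the compactness of $C$ is precisely what keeps the nonzero spectral data countable and reconciles the two regimes.
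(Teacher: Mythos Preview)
This theorem is quoted in the present paper from \cite{SP} (as Theorem~3.8 there) and is \emph{not} proved here; it serves only as an input to Theorem~\ref{sharpcharacterization} and later results. Consequently there is no proof in this paper against which to compare your attempt.

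Your argument is nonetheless essentially correct. The one structural caveat is a potential circularity: you invoke Theorem~\ref{structurethm2nd}, which is Theorem~5.1 of \cite{SP}, in order to derive Theorem~3.8 of the same paper. In \cite{SP} the dependence almost certainly runs the other way (the spectral description~3.8 is established first and then used to obtain the decomposition~5.1), so your proof is not an independent derivation of the statement. Within the logical framework of the present paper this is harmless, since both results are imported as black boxes, but it is worth flagging.

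One further remark on item~(2): your analysis correctly shows that property~(1) forbids any strictly increasing sequence of distinct eigenvalues, so a limit point of $\sigma(T)$ can only be approached from above by a \emph{decreasing} sequence. This is the opposite of what the quoted statement literally says (``the limit of an increasing sequence''). Your mathematics is right; the discrepancy is in the wording reproduced from \cite{SP}, and you should not bend your argument to match it.
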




\begin{lemma}\label{nullspacecomparison}
Let $S,T\in \mathcal B(H)$ be positive such that $S\leq T$. Then $N(T)\subseteq N(S)$.
\end{lemma}
\begin{proof}
If $x\in H$, then $\|S^{\frac{1}{2}}x\|^2=\langle Sx,x\rangle\leq \langle Tx,x\rangle=\|T^{\frac{1}{2}}x\|^2$. By observing the fact that for any $A\geq0$, $N(A^{\frac{1}{2}})=N(A)$, the conclusion follows.
\end{proof}

\begin{theorem}\label{sharpcharacterization}
Let $H$ be an infinite dimensional Hilbert space and $T\in \mathcal B(H)$. Then the following statements are equivalent:

 \begin{enumerate}
   \item \label{AN-property} $T\in \mathcal {AN}(H)$ and positive
   \item \label{orthogonalrepn}  there exists a  unique triple $(K,F,\alpha )$ where
   \begin{itemize}
     \item[(a)] $K\in \mathcal K(H)$ is positive
     \item [(b)]$F\in \mathcal F(H)$  and $0\leq F\leq \alpha I$
     \item [(c)] $KF=0$
   \end{itemize}
   such that  $T=K-F+\alpha I$.
   \end{enumerate}
\end{theorem}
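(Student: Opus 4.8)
The plan is to prove the equivalence by \emph{refining} the structure theorem recorded as Theorem \ref{structurethm2nd}, rather than starting from scratch; indeed the present statement is precisely a sharpening of that result, so the only genuinely new content is the orthogonality $KF=0$, the bound $F\le\alpha I$, the uniqueness of the triple, and the identification $\alpha=m_e(T)$. The backbone of both implications is the observation, via Weyl's theorem, that $\alpha$ is forced: whenever $T=\alpha I+A$ with $A$ compact and self-adjoint, one has $\sigma_{ess}(T)=\sigma_{ess}(\alpha I)=\{\alpha\}$, and since $T\ge 0$ gives $|T|=T$, this yields $\alpha=\inf\sigma_{ess}(|T|)=m_e(T)$ as an invariant of $T$.

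For $(1)\Rightarrow(2)$, I would first invoke Theorem \ref{structurethm2nd} to write $T=\alpha I+K_0+F_0$ with $\alpha\ge 0$, $K_0\ge 0$ compact and $F_0=F_0^{*}$ finite rank. Setting $C:=T-\alpha I=K_0+F_0$, this is a compact self-adjoint operator, so it admits its Jordan decomposition $C=C_+-C_-$ into positive and negative parts, with $C_\pm\ge 0$ compact and $C_+C_-=0$. The key step is to show $C_-$ has finite rank, and here I would use $K_0\ge 0$: since $\langle Cx,x\rangle\ge\langle F_0x,x\rangle$ for all $x$, any subspace on which $C$ is negative definite is one on which $F_0$ is negative definite, so the number of negative eigenvalues of $C$ is bounded by that of the finite-rank operator $F_0$, hence finite. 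I then set $K:=C_+$ and $F:=C_-$, which gives at once $T=K-F+\alpha I$ with $K\ge 0$ compact, $F\ge 0$ finite rank and $KF=0$. The bound $F\le\alpha I$ comes from positivity of $T$: on $\overline{R(F)}$ we have $C_+=0$, so $T=\alpha I-F$ there, and $T\ge 0$ forces $\alpha I-F\ge 0$ on that subspace; combined with $\alpha\ge 0$ off $\overline{R(F)}$ this yields $F\le\alpha I$ on all of $H$. For uniqueness, $\alpha$ is pinned down as $m_e(T)$ by the Weyl computation above, so any two admissible triples satisfy $K_1-F_1=K_2-F_2=T-\alpha I$; since each $K_i-F_i$ exhibits the self-adjoint operator $C$ as a difference of positive operators with orthogonal ranges, uniqueness of the Jordan decomposition forces $K_1=K_2=C_+$ and $F_1=F_2=C_-$.

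For the converse $(2)\Rightarrow(1)$, I would first check positivity: from $0\le F\le\alpha I$ we get $\alpha\ge 0$ and $\alpha I-F\ge 0$, so $T=(\alpha I-F)+K$ is a sum of two positive operators and hence positive. Then $T=\alpha I+K+(-F)$ displays $T$ in the form required by Theorem \ref{structurethm2nd} (with $\alpha\ge 0$, positive compact $K$, and self-adjoint finite-rank $-F$), so that theorem gives $T\in\mathcal{AN}(H)$, and $\alpha=m_e(T)$ is again the same Weyl computation. I expect the main obstacle to be the finite-rank claim for $C_-$: one must rule out infinitely many eigenvalues of $T$ below $\alpha$ accumulating at $\alpha$ from below. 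The min--max/negative-subspace argument above settles this cleanly using only $K_0\ge 0$, but one could alternatively argue directly from the $\mathcal{AN}$ property, since such an accumulation would make the restriction of $T$ to the closed span of the corresponding eigenvectors have norm $\alpha$ that is not attained, contradicting Theorem \ref{SPthm}(1). The rest is bookkeeping: the orthogonal-support computations and the verification that $\alpha\ge 0$, which is what promotes $F\le\alpha I$ from $\overline{R(F)}$ to all of $H$.
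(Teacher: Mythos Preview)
Your proof is correct and, in two respects, cleaner than the paper's. The paper also starts from Theorem~\ref{structurethm2nd} to obtain $T=\alpha I+K'+F'$ with $K'\ge 0$ compact and $F'=F'^{*}$ finite rank, but then it imports the orthogonality $K'F'=0$ from the proof of \cite[Theorem~5.1]{SP} and only afterwards decomposes $F'=F'_{+}-F'_{-}$, setting $K=K'+F'_{-}$ and $F=F'_{+}$. By contrast, you simply take the Jordan decomposition of the compact self-adjoint operator $C=T-\alpha I$ and read off $K=C_{+}$, $F=C_{-}$ with $KF=0$ automatic; the only price is the short min--max argument that $C_{-}$ has finite rank, which you handle correctly using $K_0\ge 0$. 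The bigger difference is uniqueness: the paper runs through four separate cases (on whether $\alpha,K,F$ vanish) with ad hoc arguments, whereas you observe that $\alpha$ is pinned down by Weyl's theorem as the unique point of $\sigma_{ess}(T)$, and then any admissible triple displays $T-\alpha I$ as a difference of positive operators with $KF=0$, which forces $K=(T-\alpha I)_{+}$ and $F=(T-\alpha I)_{-}$ by the identity $|T-\alpha I|=K+F$. This is both shorter and more conceptual. For $(2)\Rightarrow(1)$ both arguments are identical. One small remark: in your $F\le\alpha I$ step you implicitly use that $\alpha I-F$ decomposes with respect to $R(F)\oplus N(F)$, so positivity on each summand suffices; this is fine since $F$ is self-adjoint and hence $R(F)$ reduces $\alpha I-F$, but it is worth making explicit.
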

\begin{proof}
Proof of $(\ref{AN-property})\Rightarrow (\ref{orthogonalrepn})$: By Theorem \ref{structurethm2nd}, $T=K'-F'+\alpha I$, where $K'\in \mathcal K(H)$ is positive, $F'=F'^{*}\in \mathcal F(H)$  and $\alpha \geq 0$.
     Next we claim that $K'F'=0$. This readily follows by the  proof in \cite[Theorem 5.1]{SP}.

     Now, let $F'=F'_{+}-F'_{-}$ be the decomposition of $F'$ in terms of positive operators  $F'^{+}$ and $F'_{-}$, respectively (see \cite[page 180]{groetsch} for details). Note that $F'_{+}F'_{-}=0$. Write $K=K'+F'_{-}$ and $F=F'_{+}$. Then $K\geq 0$ and $F\geq 0$. Since $K'F'=0$,  it follows that $K'|F'|=0$. That is $K'(F'_{+}+F'_{-})=0$. Also, $K'(F'_{+}-F'_{-})=0$. These two equations imply that $KF=0$. As $T\geq 0$ and $F\geq 0$ such that $TF=FT$, it follows that $FT\geq 0$.  But $FT=F(\alpha I-F)$. Let $\lambda\in \sigma(F)$. Then $\lambda\geq 0$ and since $FT\geq 0$,  by the spectral mapping theorem, we have that $\lambda(\alpha-\lambda)\geq 0$. From this, we can conclude that $\alpha-\lambda\geq 0$ for each  $\lambda \in \sigma(F)$. As $\alpha I-F$ is self-adjoint and $\sigma(\alpha I-F)\subseteq [0,\infty)$, $\alpha I-F$ must be positive.
This concludes that $F\leq \alpha I$.

Next we  show that the triple satisfying the given conditions is unique. Suppose there exists two triples $(K_1,F_1,\alpha_1), (K_2,F_2,\alpha_2 )$ satisfying the stated conditions. We prove this by considering all possible cases.

Case $1;\; \alpha_1=0:$ In this case, $F_1=0$. Hence $K_1=T=K_2-F_2+\alpha_2I$. This shows that $\alpha_2I=K_1-K_2+F_2$, a compact operator. Since $H$ is infinite dimensional, it follows that $\alpha_2=0$. Thus $F_2=0$. Hence we can conclude that $K_1=K_2$.

Case $2;\; F_1=0, \alpha_1>0$: In this case,
\begin{equation}\label{uniquenesseq1}
K_1+\alpha_1I=K_2-F_2+\alpha_2I.
 \end{equation}
 Then $(\alpha_2-\alpha_1)I=(K_1-K_2)+F_2$ ,  a compact operator. If this is zero, then $\alpha_1=\alpha_2$. If not, $(\alpha_1-\alpha_2)I$ is a compact operator and $H$ is infinite dimensional, $\alpha_1=\alpha_2$.

Now, the Equation (\ref{uniquenesseq1}) can be written as $K_2=F_2+K_1\geq F_2$. Now, by   Lemma  \ref{nullspacecomparison}, we have that $N(K_2)\subseteq N(F_2)$. But, by the condition $K_2F_2=0$, we have, $R(F_2)\subseteq N(K_2)$, hence $R(F_2)\subseteq N(F_2)$. Thus, $F_2=0$.
From this we can conclude that $K_1=K_2$.

Case $3\; K_1=0, \; F_1\neq 0,\; \alpha_1>0$: We have $F_1+\alpha_1 I=K_2-F_2+\alpha_2 I$. Using the same argument as in the above cases, we can conclude that $\alpha_1=\alpha_2$. Thus we have $F_2=K_2+F_1\geq K_2$. Now, by Lemma  \ref{nullspacecomparison}, $N(F_2)\subseteq N(K_2)$. But by  the property $K_2F_2=0$, it follows that $R(F_2)\subseteq N(K_2)$. Hence $H=N(F_2)\oplus R(F_2)\subseteq N(K_2)$. This shows that $K_2=0$. Finally, using this we can get $F_1=F_2$.

Case $4 \; K_1\neq 0,\; F_1\neq 0,\; \alpha_1>0$: We can prove $\alpha_1=\alpha_2$ by arguing as in the earlier cases. With this we have
\begin{equation}\label{uniquenesseq2}
K_1-F_1=K_2-F_2.
 \end{equation}
 As $F_1$ commute with $K_1$ and $F_1$, it commute with $K_2-F_2$. So $F_1$ must commute with $(K_2-F_2)^2=K_2^2+F_2^2=(K_2+F_2)^2$. Thus, it commute with $K_2+F_2$. Hence we can conclude that $F_1$ commute with both $K_2$ and $F_2$. Since $N(F_1)$ is invariant under $K_1$ and $F_1$, by Equation (\ref{structurethm2nd}), $N(F_1)$ is invariant under $K_2-F_2$.

 Now if $x\in N(F_1)$. Then by Equation (\ref{uniquenesseq2}), we have $(K_2-K_1)x=F_2x$. Using the fact that $F_2\geq 0$, we can conclude that $K_2\geq K_1$ on $N(F_1)$. We also show that this will happen on $R(F_1)$.

 For $x\in H$, we have $F_1x\in R(F_1)$. Now,
 \begin{equation*}
\langle (F_2-F_1)(F_1x),F_1x\rangle =\langle (K_2-K_1)(F_1x),F_1x\rangle=\langle K_2(F_1x),F_1x\rangle\geq 0.
 \end{equation*}
 This shows that $K_2-K_1=F_2-F_1\geq 0$ on $R(F_1)$. Combining with the earlier argument, we can conclude that $K_1\leq K_2$. Now, interchanging the roles of $K_1$ and $K_2$, we can conclude that  $K_2\leq K_1$  and hence $K_1=K_2$. By Equation (\ref{uniquenesseq2}), we can conclude that $F_1=F_2$.

Proof of $(\ref{orthogonalrepn})\Rightarrow (\ref{AN-property})$: If $T=K-F+\alpha I$, where $K\in \mathcal K(H)$ is positive, $F\in \mathcal F(H)$ is positive, $\alpha \geq 0$ and $KF=0$. Then by Theorem \ref{structurethm2nd}, $T\in \mathcal {AN}(H)$. Since $K\geq 0$ and $-F+\alpha I\geq 0$, $T$ must be positive.
\end{proof}

\begin{remark}
Let $T$ be as in Theorem \ref{sharpcharacterization}. Then we have the following:
\begin{enumerate}
\item if $\alpha=0$, then $F=0$ and hence $T=K$. In this case $\sigma_{ess}(T)={\{\alpha}\}$
\item if $\alpha>0$ and $F=0$, then $T=K+\alpha I$. In this case, $\sigma_{ess}(T)={\{\alpha}\}$ and $m_{e}(T)=\alpha=m(T)$
\item  if $\alpha>0,\; K=0$ and $F\neq 0$, then $T=\alpha I-F$. In this case also, $\sigma_{ess}(T)={\{\alpha}\}$ and $m_{e}(T)=\alpha$
\item if $\alpha>0,\; F\neq 0$ and $K\neq 0$, then by the Weyl's theorem, $\sigma_{ess}(T)={\{\alpha}\}$ and $m_{e}(T)=\alpha$
\item if $\alpha=0$ and $K=0$, then $T=0$
\item if $N(T)$ is infinite dimensional, then $0$ is an eigenvalue with infinite multiplicity and hence $\alpha=0$ by  Theorem \ref{SPthm}. In this case, $F=0$ and hence $T=K$.
\end{enumerate}
\end{remark}

\begin{remark}
If  we take $F=0$ in Theorem \ref{sharpcharacterization}, then we get the structure obtained in \cite{rameshstructurethm}.
\end{remark}
Here we prove some important properties of $\mathcal{AN}$-operators.
\begin{proposition}\label{propertiesofAN}
 Let $T=K-F+\alpha I$, where $K\in \mathcal K(H)$ is positive, $F\in \mathcal F(H)$ is positive with $KF=0$ and $F\leq \alpha I$. Assume that $\alpha>0$. Then
 the following statements hold.
 \begin{enumerate}
  \item\label{closedrange} $R(T)$ is closed
  \item\label{fdmlnullspace}  $N(T)$ is finite dimensional
  \item \label{nullspaceinclusion} $N(T)\subseteq N(K)$
  \item\label{eigenvaluefiniterank} $Fx=\alpha x$ for all $x\in N(T)$. Hence $N(T)\subseteq R(F)$.  In this case, $\|F\|=\alpha$.
  \item  \label{injectivecharacterization} $T$ is one-to-one if and only if $\|F\|<\alpha$
  \item  \label{FredolmAN}$T$ is Fredholm and $m_e(T)=\alpha$.
  \end{enumerate}
\end{proposition}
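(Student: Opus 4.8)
The plan is to reduce all six assertions to a single orthogonal decomposition of $H$ adapted to the finite rank operator $F$. First I would observe that since $K$ and $F$ are positive (hence self-adjoint) and $KF=0$, also $FK=(KF)^{*}=0$, so $K$ and $F$ commute and moreover $R(F)\subseteq N(K)$. Setting $M:=R(F)$, finite-dimensionality of $R(F)$ gives $\dim M<\infty$ and $M^{\bot}=N(F)$. The commutation of $K$ with $F$ shows that $K$ maps $N(F)$ into $N(F)$ (if $Fx=0$ then $F(Kx)=FKx=0$) and maps $R(F)$ to $0\subseteq R(F)$; hence both $M$ and $M^{\bot}$ reduce $K$, and therefore reduce $T$. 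Using $K|_{M}=0$ and $F|_{M^{\bot}}=0$, this yields the block form
\begin{equation*}
T=\bigl(\alpha I_{M}-F|_{M}\bigr)\oplus\bigl(\alpha I_{M^{\bot}}+K|_{M^{\bot}}\bigr),
\end{equation*}
from which everything will follow.

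Next I would analyze the two summands. On $M^{\bot}$ the operator $\alpha I+K|_{M^{\bot}}$ satisfies $\langle(\alpha I+K)x,x\rangle\ge\alpha\|x\|^{2}$, so it is bounded below by $\alpha>0$; being self-adjoint and bounded below it is invertible on $M^{\bot}$, in particular injective and surjective onto $M^{\bot}$. Consequently $N(T)=N(\alpha I_{M}-F|_{M})\subseteq M=R(F)$, which is finite dimensional, giving (2); the inclusion $R(F)\subseteq N(K)$ then gives (3). For $x\in N(T)\subseteq M$ the block form forces $\alpha x=Fx$, which is the eigenvalue statement of (4) and re-confirms $N(T)\subseteq R(F)$; moreover, if $N(T)\neq\{0\}$ this exhibits $\alpha$ as an eigenvalue of $F$, and together with $F\le\alpha I$ (so $\|F\|\le\alpha$) we get $\|F\|=\alpha$. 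For (5), injectivity of the $M^{\bot}$-block reduces the question to $\alpha I_{M}-F|_{M}$, which is injective iff $\alpha$ is not an eigenvalue of the positive operator $F|_{M}$ on $R(F)$; since the eigenvalues of $F$ on $R(F)$ lie in $(0,\alpha]$ and $\|F\|$ equals the largest of them, this is exactly the condition $\|F\|<\alpha$.

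For (1), the same reduction gives $R(T)=R(\alpha I_{M}-F|_{M})\oplus R(\alpha I_{M^{\bot}}+K|_{M^{\bot}})$: the first summand is a subspace of the finite dimensional space $M$, hence closed, and the second is all of $M^{\bot}$, hence closed; being mutually orthogonal, their sum $R(T)$ is closed. Finally, for (6), $T$ is self-adjoint, so $N(T^{*})=N(T)$; combined with (1) and (2), the kernel and cokernel are finite dimensional and the range is closed, so $T$ is Fredholm. Since $T-\alpha I=K-F$ is compact, Weyl's theorem gives $\sigma_{ess}(T)=\sigma_{ess}(\alpha I)=\{\alpha\}$, and as $T=K+(\alpha I-F)\ge 0$ we have $|T|=T$, whence $m_{e}(T)=\inf\sigma_{ess}(|T|)=\alpha$.

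The computations on each block are routine; the only step needing genuine care is the very first one, namely verifying that $M=R(F)$ and $M^{\bot}=N(F)$ actually reduce $T$, which is where the hypotheses $KF=0$ (yielding $FK=0$ and $R(F)\subseteq N(K)$) do all the work. A secondary point to keep honest is the clause $\|F\|=\alpha$ in (4), which should be read under the standing assumption that $N(T)\neq\{0\}$, consistent with the equivalence proved in (5).
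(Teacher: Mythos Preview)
Your proof is correct, but the route differs from the paper's. The paper argues item by item with separate tools: for (\ref{closedrange}) it invokes the general fact that $A+\lambda I$ has closed range whenever $A$ is compact and $\lambda\neq 0$; for (\ref{fdmlnullspace}) it observes that $(K-F)x=-\alpha x$ on $N(T)$ makes $\alpha I_{N(T)}$ compact; for (\ref{nullspaceinclusion}) it multiplies $(K-F)x=-\alpha x$ by $K$ and uses positivity of $K$ to rule out $-\alpha\in\sigma_p(K)$; for (\ref{injectivecharacterization}) it uses that the finite-rank positive operator $F$ attains its norm as an eigenvalue; and for (\ref{FredolmAN}) it appeals to Weyl's theorem together with Bouldin's characterization of Fredholm operators via the essential minimum modulus.

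Your approach instead sets up once and for all the orthogonal block decomposition $T=(\alpha I_{M}-F|_{M})\oplus(\alpha I_{M^{\bot}}+K|_{M^{\bot}})$ with $M=R(F)$, and then reads off every item from the invertibility of the second block and finite dimensionality of the first. This is more unified and in some respects more elementary: you obtain closedness of $R(T)$ and Fredholmness directly from the block form without needing the compact-perturbation lemma or Bouldin's theorem. The paper in fact derives this same block decomposition immediately \emph{after} Proposition~\ref{propertiesofAN}, so you have essentially reversed the order of presentation, using the structural picture to prove the proposition rather than the other way around. Your closing remark about reading the clause $\|F\|=\alpha$ in (\ref{eigenvaluefiniterank}) under the hypothesis $N(T)\neq\{0\}$ is apt and consistent with (\ref{injectivecharacterization}).
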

\begin{proof}
Proof of (\ref{closedrange}): Since $K-F$ is a compact operator, $R(T)$ is closed. Here we have used the fact that for any $A\in \mathcal K(H)$, and $\lambda \in \mathbb C\setminus {\{0}\}$, $R(K+\lambda I)$ is closed.

Proof of (\ref{fdmlnullspace}): Let $x\in N(T)$. Then
\begin{equation}\label{differencecompacteqn}
(K-F)x=-\alpha x.
\end{equation}
That is $\alpha I_{N(T)}$ is compact. This concludes that $N(T)$ is finite dimensional.

Proof of (\ref{nullspaceinclusion}):  Let $x\in N(T)$. Multiplying Equation (\ref{differencecompacteqn}) by $K$ and using the fact that $KF=FK=0$, we have $K^2x=-\alpha Kx$. If $Kx\neq 0$, then $-\alpha \in \sigma_p(K)$, contradicts the positivity of $K$. Hence $Kx=0$.

Proof of (\ref{eigenvaluefiniterank}): Clearly, if $Tx=0$, then by (\ref{nullspaceinclusion}), we have $Fx=\alpha x$. This also concludes that $N(T)\subseteq R(F)$.

Proof of (\ref{injectivecharacterization}): If $T$ is not one-to-one, then $Fx=\alpha x$ for $x\in N(T)$ by (\ref{eigenvaluefiniterank}). Suppose  $T$ is one-to-one and $\|F\|=\alpha$. Since $F$ is norm attaining by Proposition \ref{normattainingeigenvalue}, there exists $x\in S_H$
such that $Fx=\alpha x$. Then $Tx=Kx-Fx+\alpha x=Kx$. But $KF=0$ implies that $x\in N(K)$.  So,  $Tx=Kx$=0.  By the injectivity of $T$, we have that  $x=0$. This contradicts the fact that $x\in S_H$. Hence  $\|F\|<\alpha$.

Proof of (\ref{FredolmAN}):  Note that $\sigma_{ess}(T)={\{\alpha}\}$ by the  Weyl's theorem on essential spectrum. Hence $m_e(T)=\alpha=m_e(T^*)$. Now $T$ is Fredholm operator  by  \cite[Theorem 2]{bouldin} with index zero.
\end{proof}

\begin{theorem}\label{squarerootpreserving}
Let $T\in \mathcal B(H)$ and positive. Then $T\in \mathcal {AN}(H)$ if and only if $T^2\in \mathcal {AN}(H)$.
\end{theorem}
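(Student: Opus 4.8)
The strategy is to reduce both implications to the structural characterization of Theorem \ref{sharpcharacterization} and to observe that squaring (resp. taking positive square roots) transforms the canonical triple $(K,F,\alpha)$ in a controlled way. Since every bounded operator on a finite dimensional space attains its norm on the (compact) unit sphere of any closed subspace, the statement is trivial there; so I would first dispose of that case and assume $H$ is infinite dimensional, so that Theorem \ref{sharpcharacterization} is available.

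For the forward implication, write $T=K-F+\alpha I$ as in Theorem \ref{sharpcharacterization}. Since $K,F$ are positive with $KF=0$, they also satisfy $FK=(KF)^{*}=0$, so all three summands commute and the cross terms disappear upon squaring, giving
\[
T^{2}=(K^{2}+2\alpha K)-(2\alpha F-F^{2})+\alpha^{2} I.
\]
I would then set $\tilde K=K^{2}+2\alpha K$, $\tilde F=2\alpha F-F^{2}=F(2\alpha I-F)$ and $\tilde\alpha=\alpha^{2}$, and verify the four defining conditions: $\tilde K$ is positive and compact; $\tilde F$ is positive (because $2\alpha I-F\geq\alpha I\geq0$ commutes with $F\geq0$) and finite rank; $\tilde K\tilde F=0$ (every resulting term carries the factor $KF=0$); and $\tilde\alpha I-\tilde F=(\alpha I-F)^{2}\geq0$, i.e. $\tilde F\leq\tilde\alpha I$. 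These are precisely the hypotheses of Theorem \ref{sharpcharacterization}, whence $T^{2}$ is positive and in $\mathcal{AN}(H)$.

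For the converse, apply Theorem \ref{sharpcharacterization} to the positive $\mathcal{AN}$-operator $T^{2}$ to obtain $T^{2}=\tilde K-\tilde F+\beta I$ with the stated properties, and put $\alpha=\sqrt{\beta}$. The relation $\tilde K\tilde F=0$ forces $\overline{R(\tilde F)}\perp\overline{R(\tilde K)}$, so $H$ splits into the mutually orthogonal reducing subspaces $M_{K}=\overline{R(\tilde K)}$, $M_{F}=\overline{R(\tilde F)}$ (finite dimensional) and $M_{0}=N(\tilde K)\cap N(\tilde F)$; each reduces $\tilde K$ and $\tilde F$, hence $T^{2}$, hence $T=(T^{2})^{1/2}$. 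Using functional calculus on each piece, $T=(\tilde K+\beta I)^{1/2}$ on $M_{K}$, $T=\alpha I$ on $M_{0}$, and $T=(\beta I-\tilde F)^{1/2}$ on $M_{F}$. Setting $K':=(\tilde K+\beta I)^{1/2}-\alpha I$ on $M_{K}$ (extended by $0$) and $F':=\alpha I-(\beta I-\tilde F)^{1/2}$ on $M_{F}$ (extended by $0$) yields $T=K'-F'+\alpha I$, and I would check that $(K',F',\alpha)$ satisfies Theorem \ref{sharpcharacterization}: $K'$ is positive and compact because its eigenvalues $\sqrt{\mu+\beta}-\sqrt{\beta}$ tend to $0$ together with the eigenvalues $\mu$ of $\tilde K$; $F'$ is positive finite rank with eigenvalues $\sqrt{\beta}-\sqrt{\beta-\nu}\in(0,\alpha]$, so $F'\leq\alpha I$; and $K'F'=0$ since they live on the orthogonal subspaces $M_{K}$ and $M_{F}$. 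Hence $T\in\mathcal{AN}(H)$.

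The bookkeeping with commutators and the elementary scalar inequalities are routine; the step carrying the real content, and where I expect the main obstacle, is the converse's assertion that the positive square root preserves the ``compact plus finite rank plus scalar'' form. This is exactly where the orthogonality $\tilde K\tilde F=0$ is indispensable, for it provides the clean reducing decomposition that lets me apply the scalar functions $x\mapsto\sqrt{x+\beta}-\sqrt{\beta}$ and $x\mapsto\sqrt{\beta}-\sqrt{\beta-x}$ separately on $M_{K}$ and $M_{F}$ and read off compactness and finite rank; without the splitting, $(T^{2})^{1/2}$ would entangle the two parts and the argument would break down.
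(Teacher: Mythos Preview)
Your forward implication coincides with the paper's almost verbatim. For the converse, however, the paper takes a different and more algebraic route: writing $T^{2}=K-F+\alpha I$ as in Theorem~\ref{sharpcharacterization}, it factors $T^{2}-\alpha I=(T-\sqrt{\alpha}\,I)(T+\sqrt{\alpha}\,I)=K-F$, notes that for $\alpha>0$ the positive operator $T+\sqrt{\alpha}\,I$ is invertible, and reads off $T=\sqrt{\alpha}\,I+K_{1}-F_{1}$ with $K_{1}=K(T+\sqrt{\alpha}\,I)^{-1}$ and $F_{1}=F(T+\sqrt{\alpha}\,I)^{-1}$. Since $K$ and $F$ commute with $T^{2}$ and hence with $T$, the operators $K_{1},F_{1}$ are positive with $K_{1}F_{1}=0$, and the bound $\|F_{1}\|\le\|F\|\,\|(T+\sqrt{\alpha}\,I)^{-1}\|\le \alpha/(\sqrt{\alpha}+m(T))\le\sqrt{\alpha}$ supplies $F_{1}\le\sqrt{\alpha}\,I$; the case $\alpha=0$ is disposed of separately ($T^{2}$ compact forces $T=(T^{2})^{1/2}$ compact). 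Your approach via the reducing decomposition $H=M_{K}\oplus M_{F}\oplus M_{0}$ and piecewise functional calculus is equally correct and arguably more transparent: it exhibits $K'$ and $F'$ explicitly as $g(\tilde K)$ and $h(\tilde F)$ for continuous functions vanishing at $0$, which makes compactness of $K'$ immediate and handles $\beta=0$ without a case split, at the modest cost of setting up the orthogonal splitting. The paper's factorization trick is shorter but trades that geometry for a norm estimate and a separate treatment of $\alpha=0$.
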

\begin{proof}
First we will assume that $T\in \mathcal {AN}(H).$ Then  there exists a triple $(K, F,\alpha)$  as in (\ref{orthogonalrepn}) of Theorem \ref{sharpcharacterization}.  Then $T^2=K_1-F_1+\beta I$, where $K_1=K^2+2\alpha K$, a positive compact operator, $F_1=2\alpha F-F^2=(2\alpha I-F)F$ and $\beta=\alpha^2$. Clearly, $F_1\geq 0$ as it is the
product of two commuting positive operators. Also $F_1\in \mathcal F(H)$. Next, we show that $F_1\leq \alpha ^2I$. Clearly, $\alpha^2I-F_1$ is self-adjoint and $\alpha^2I-F_1=(\alpha I-F)^2\geq 0$. It can be easily verified that $K_1F_1=0$. So, $T^2$ is also in the same form. Hence by Theorem \ref{sharpcharacterization},  $T^2\in \mathcal {AN}(H)$.

Now, let $T^2\in \mathcal {AN}(H)$. Then by Theorem \ref{sharpcharacterization}, $T^2= K-F + \alpha I$, where $K\in \mathcal K(H)$ is positive, $F\in \mathcal F(H)$ is positive with $FK=KF=0$ and $F\leq \alpha I $.
If $\alpha > 0,$ then $(T-\sqrt{\alpha} I)(T+\sqrt{\alpha} I)= K-F$.  Since $T$ is positive  $T+\sqrt{\alpha} I$ is a positive invertible operator. Hence $T-\sqrt{\alpha} I= (K-F)(T+\sqrt{\alpha} I)^{-1}$.
Hence there is a positive compact operator, namely $K_1 = K (T+\sqrt{\alpha})^{-1}$ and a finite rank positive operator, namely $F_1 = F (T+\sqrt{\alpha} I)^{-1},$  such that $T-\sqrt{\alpha} I= K_1 + F_1.$
Hence $T= K_1-F_1 + \sqrt{\alpha}I$. Also note that since $F$ and $K$ commute with $T^2$, hence with $T$. Thus, we can conclude that $F_1K_1=0$.
Finally,
\begin{align*}
\|F_1\|\leq \|F\|\, \|(T+\sqrt{\alpha} I)^{-1}\| &\leq \alpha \, \frac{1}{m(T+\sqrt{\alpha} I)}\\
&=\frac{\alpha}{\sqrt{\alpha}+m(T)} \\
&\leq \frac{\alpha}{\sqrt{\alpha}}=\sqrt{\alpha}.
\end{align*}
In the third step of the above inequalities we used the fact that $m(T+\sqrt{\alpha} I)=\sqrt{\alpha}+m(T)$, which follows by \cite[Proposition 2.1]{rameshstructurethm}.

If $\alpha=0$, then clearly $F=0$ and hence $T^2=K$. So,  $T=K^{\frac{1}{2}}$, a compact operator which is clearly an $\mathcal{AN}$-operator.
\end{proof}
\begin{corollary}
Let $T\in \mathcal B(H)$ and positive. Then $T\in \mathcal {AN}(H)$ if and only if $T^{\frac{1}{2}}\in \mathcal {AN}(H)$.
\end{corollary}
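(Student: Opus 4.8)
The plan is to derive this corollary as an immediate specialization of Theorem \ref{squarerootpreserving}, with essentially no new work required. Since $T\in \mathcal B(H)$ is positive, the facts recalled in the preliminaries guarantee that its positive square root $S:=T^{\frac{1}{2}}\in \mathcal B(H)$ exists, is bounded, and is again a positive operator, with $S^2=T$. Thus $S$ is exactly the kind of operator to which Theorem \ref{squarerootpreserving} applies.

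The key step is then to invoke Theorem \ref{squarerootpreserving} with the hypothesis of that theorem applied to $S$ in place of $T$. That theorem states that for an arbitrary positive operator, belonging to $\mathcal{AN}(H)$ is equivalent to having its square belong to $\mathcal{AN}(H)$. Applying this to the positive operator $S$ gives $S\in \mathcal{AN}(H)$ if and only if $S^2\in \mathcal{AN}(H)$. Since $S^2=T$ and $S=T^{\frac{1}{2}}$, this is precisely the asserted equivalence $T^{\frac{1}{2}}\in \mathcal{AN}(H)\iff T\in \mathcal{AN}(H)$.

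There is no genuine obstacle in this argument: all of the analytic content (the structure theorem, the factorization $T^2=K_1-F_1+\alpha^2 I$ with $K_1F_1=0$ and $F_1\le \alpha^2 I$, and the delicate converse using $(T+\sqrt{\alpha}I)^{-1}$) is already carried by Theorem \ref{squarerootpreserving}. The only point one must note is that the positive square root $T^{\frac{1}{2}}$ is itself a positive bounded operator, so that the hypotheses of Theorem \ref{squarerootpreserving} are met; this is standard and stated in the introductory section. Everything else is a one-line reindexing of that theorem.
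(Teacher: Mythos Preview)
Your proof is correct and follows exactly the same approach as the paper: set $S=T^{\frac{1}{2}}$, note that $S\geq 0$, and apply Theorem \ref{squarerootpreserving} to $S$ so that $S\in\mathcal{AN}(H)\iff S^2=T\in\mathcal{AN}(H)$.
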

\begin{proof}
Let $S=T^{\frac{1}{2}}$. Then $S\geq 0$. The conclusion follows by Theorem \ref{squarerootpreserving}.
\end{proof}
\begin{corollary}\label{gramchar}
Let $T\in \mathcal B(H_1,H_2)$. Then $T\in \mathcal {AN}(H_1,H_2)$ if and only $T^*T\in \mathcal {AN}(H_1)$.
\end{corollary}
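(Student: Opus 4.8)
The plan is to factor the equivalence through the modulus $|T| = (T^*T)^{\frac{1}{2}} \in \mathcal B(H_1)$, using two separate bridges: one connecting $T$ with $|T|$, and one connecting $|T|$ with $T^*T$. Since every operator on a finite dimensional space is compact and hence $\mathcal{AN}$, I would dispose of that case at once and assume $H_1$ infinite dimensional where it matters.

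First I would record the pointwise norm identity $\|Tx\|^2 = \langle T^*Tx, x\rangle = \langle |T|^2 x, x\rangle = \||T|x\|^2$, valid for every $x \in H_1$, which gives $\|Tx\| = \||T|x\|$ for all $x$. Consequently, for any nonzero closed subspace $M \subseteq H_1$ one has $\|T|_M\| = \sup_{x \in S_M}\|Tx\| = \sup_{x \in S_M}\||T|x\| = \||T|_M\|$, and moreover a unit vector $x \in M$ satisfies $\|T|_M x\| = \|T|_M\|$ if and only if $\||T|_M x\| = \||T|_M\|$. Hence $T|_M$ is norm attaining exactly when $|T|_M$ is. As $M$ is arbitrary, this shows $T \in \mathcal{AN}(H_1, H_2)$ if and only if $|T| \in \mathcal{AN}(H_1)$.

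Second, since $T^*T$ is positive and $|T| = (T^*T)^{\frac{1}{2}}$, the preceding corollary (the square-root corollary applied to the positive operator $S = T^*T \in \mathcal B(H_1)$) yields $T^*T \in \mathcal{AN}(H_1)$ if and only if $(T^*T)^{\frac{1}{2}} = |T| \in \mathcal{AN}(H_1)$. Chaining the two equivalences gives $T \in \mathcal{AN}(H_1, H_2) \iff |T| \in \mathcal{AN}(H_1) \iff T^*T \in \mathcal{AN}(H_1)$, which is the assertion.

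I expect the only point requiring care—rather than a genuine obstacle—is the reduction of norm attainment of a restriction to the pointwise identity $\|Tx\| = \||T|x\|$: one must check that both the value of the supremum and its attainment transfer cleanly to each subspace $M$. This works precisely because the identity is pointwise and independent of $M$, so no compactness or spectral input is needed for this half. A minor bookkeeping remark is that the square-root corollary is stated on a single Hilbert space $\mathcal B(H)$, taken here to be $H_1$, so it applies verbatim to $S = T^*T$.
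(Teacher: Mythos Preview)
Your proposal is correct and follows essentially the same route as the paper: the paper's proof is the single chain $T^*T\in \mathcal{AN}(H_1)\Leftrightarrow |T|^2\in \mathcal{AN}(H_1)\Leftrightarrow |T|\in \mathcal{AN}(H_1)\Leftrightarrow T\in \mathcal{AN}(H_1,H_2)$, invoking the square-root result for the middle equivalence and the pointwise identity $\|Tx\|=\||T|x\|$ for the last. You have simply spelled out the last equivalence in more detail than the paper does.
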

\begin{proof}
Proof follows from the following:
$T^*T\in \mathcal {AN}(H_1)\Leftrightarrow |T|^2\in \mathcal {AN}(H_1)\Leftrightarrow |T|\in \mathcal {AN}(H_1) \Leftrightarrow T \in \mathcal {AN}(H_1, H_2).$
\end{proof}
We have the following consequence.
\begin{theorem}
 Let $T\in \mathcal {AN}(H)$ be self-adjoint and $\lambda $ be a purely imaginary number. Then $T \pm \lambda I \in \mathcal{AN}(H)$.
\end{theorem}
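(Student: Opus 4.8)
The plan is to reduce the claim to a statement about the positive operator $T^2$ by means of the Gram-operator characterisation in Corollary \ref{gramchar}. Write $\lambda = i\mu$ with $\mu \in \mathbb{R}$; if $\mu = 0$ there is nothing to prove, so I may assume $\mu \neq 0$. Set $S = T \pm \lambda I$. Since $T$ is self-adjoint and $\bar{\lambda} = -\lambda$, we have $S^* = T \mp \lambda I$, and a direct computation gives
\begin{equation*}
S^* S = (T \mp \lambda I)(T \pm \lambda I) = T^2 - \lambda^2 I = T^2 + \mu^2 I,
\end{equation*}
using $\lambda^2 = -\mu^2$ and the fact that $T$ commutes with $\lambda I$. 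Thus the \emph{same} positive operator $T^2 + \mu^2 I$ arises for both signs, and by Corollary \ref{gramchar} it suffices to prove $T^2 + \mu^2 I \in \mathcal{AN}(H)$.

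Next I would establish that $T^2 \in \mathcal{AN}(H)$. Because $T$ is self-adjoint, $T^2 = T^* T$, so applying Corollary \ref{gramchar} to the hypothesis $T \in \mathcal{AN}(H)$ yields $T^2 \in \mathcal{AN}(H)$ at once. Now $T^2$ is positive, so Theorem \ref{structurethm2nd} applies: there exist $\alpha \geq 0$, a positive compact operator $K$, and a self-adjoint finite rank operator $F$ with $T^2 = \alpha I + K + F$. It is worth noting that one \emph{cannot} invoke Theorem \ref{squarerootpreserving} directly here, since that result presumes $T$ itself positive, which is precisely the hypothesis we lack; routing through the identity $T^2 = T^* T$ circumvents this.

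Finally, adding the nonnegative scalar $\mu^2$ to the $\alpha$-coefficient preserves the structural form:
\begin{equation*}
T^2 + \mu^2 I = (\alpha + \mu^2) I + K + F,
\end{equation*}
where $\alpha + \mu^2 \geq 0$, $K$ is still a positive compact operator, and $F$ is still a self-adjoint finite rank operator. Hence $T^2 + \mu^2 I$ is again of the form guaranteed by Theorem \ref{structurethm2nd}, and is therefore a positive $\mathcal{AN}$-operator. Combining this with the reduction of the first paragraph, Corollary \ref{gramchar} gives $T \pm \lambda I \in \mathcal{AN}(H)$, as required.

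The argument is essentially mechanical once the identity $S^* S = T^2 + \mu^2 I$ is in hand; the only point demanding care is that $T$ is merely self-adjoint rather than positive, which blocks the naive use of Theorem \ref{squarerootpreserving} and forces the short detour through the Gram operator $T^* T$. One could equally well phrase the last step through the sharper Theorem \ref{sharpcharacterization}, observing that $F \leq \alpha I \leq (\alpha + \mu^2) I$ and $KF = 0$ persist, but Theorem \ref{structurethm2nd} already suffices for the bare $\mathcal{AN}$-conclusion.
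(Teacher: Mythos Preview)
Your proof is correct and follows essentially the same route as the paper: compute $S^*S = T^2 + |\lambda|^2 I$, use Corollary~\ref{gramchar} (via $T^2 = T^*T$) to see that $T^2$ is a positive $\mathcal{AN}$-operator, invoke a structure theorem to absorb the scalar shift, and conclude with Corollary~\ref{gramchar} again. The only cosmetic difference is that the paper phrases the decomposition through Theorem~\ref{sharpcharacterization} rather than Theorem~\ref{structurethm2nd}, a variant you yourself note in your final remark.
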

\begin{proof}
Let $S=T  \pm\lambda I$. Then $S^*S=T^2+|\lambda|^2I =K-F+(\alpha +|\lambda|^2)I$, where the triple $(K,F,\alpha)$ satisfy conditions  (\ref{orthogonalrepn}) of Theorem \ref{sharpcharacterization}.  Hence by Corollary \ref{gramchar}, $S\in \mathcal{AN}(H)$.
\end{proof}

The following result is well known.
\begin{lemma}\label{inversedifference}
  Let $S,T\in \mathcal B(H)$ be such that $S^{-1},T^{-1}\in \mathcal B(H)$. Then $S^{-1}-T^{-1}=T^{-1}(T-S)S^{-1}$.
\end{lemma}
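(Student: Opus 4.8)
The plan is to verify the identity by direct algebraic manipulation of the right-hand side, using only associativity of operator composition together with the defining relations $T^{-1}T = I$ and $SS^{-1} = I$ of the inverses, whose existence in $\mathcal B(H)$ is guaranteed by hypothesis. No spectral theory or approximation argument is needed; this is a purely formal computation in the (noncommutative) unital algebra $\mathcal B(H)$, so the whole proof amounts to expanding a product and cancelling.

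Concretely, I would expand the product $T^{-1}(T-S)S^{-1}$ by distributing over the difference $T-S$, writing
\begin{equation*}
T^{-1}(T-S)S^{-1} = T^{-1}T S^{-1} - T^{-1}S S^{-1}.
\end{equation*}
Then I would regroup each term by associativity: in the first summand $T^{-1}T = I$, so it collapses to $S^{-1}$, while in the second summand $S S^{-1} = I$, so it collapses to $T^{-1}$. This leaves $S^{-1} - T^{-1}$, which is exactly the left-hand side, and the equality holds as an identity of bounded operators on all of $H$.

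There is essentially no obstacle to speak of, since every expression appearing is a composition of bounded operators and hence again bounded. The only point requiring any care is that $\mathcal B(H)$ is noncommutative, so one must preserve the left-to-right order of the factors throughout and cancel $T^{-1}T$ and $SS^{-1}$ only where they actually stand adjacent, rather than inadvertently transposing $T^{-1}$ past $S$ or $S^{-1}$ past $T$. Once the ordering is respected, the conclusion is immediate; this explains why the statement is (correctly) labelled as well known.
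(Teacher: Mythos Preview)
Your proof is correct: expanding $T^{-1}(T-S)S^{-1}$ by distributivity and using $T^{-1}T=I$ and $SS^{-1}=I$ immediately yields $S^{-1}-T^{-1}$. The paper itself gives no proof, merely declaring the identity well known, so your one-line verification is precisely the standard argument one would supply.
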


\begin{theorem}
  Let $T=K-F+\alpha I$, where $(K, F, \alpha)$ satisfy conditions (\ref{orthogonalrepn}) of Theorem \ref{sharpcharacterization}. Then
  \begin{enumerate}
    \item \label{reducingfiniterank} $R(F)$ reduces $T$
    \item\label{matrixform} $T=\left(
               \begin{array}{cc}
                 K_0+\alpha I |_{N(F)}& 0 \\
                  & \alpha I|_{R(F)}-F_0 \\
               \end{array}
             \right)
    $,
    where $K_0=K|_{N(F)}$ and $F_0=F|_{R(F)}$.
    \item \label{blockforminverse} if $T$ is one-to-one and $\alpha>0$,  then $T^{-1}\in \mathcal B(H)$ and
    \tiny
    \begin{equation*}
      T^{-1}=\left(
                                               \begin{array}{cc}
                                                 \alpha ^{-1}I_{N(F)}-\alpha^{-1}K_0(K_0+\alpha I_{N(F)})^{-1} & 0 \\
                                                 0 & \alpha^{-1}I_{R(F)}+\alpha^{-1}F_0(\alpha I_{R(F)}-F_0)^{-1} \\
                                               \end{array}
                                             \right).
                                           \end{equation*}
  \end{enumerate}
\end{theorem}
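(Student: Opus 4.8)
The plan is to work throughout with the orthogonal decomposition $H = N(F) \oplus R(F)$, which is legitimate because $F$ is a positive---and hence self-adjoint---finite rank operator, so $R(F)$ is finite dimensional (in particular closed) and $N(F) = R(F)^{\perp}$. The two algebraic facts that drive everything are the given relation $KF = 0$ together with its adjoint $FK = (KF)^{*} = 0$ (both $K$ and $F$ being self-adjoint). Geometrically these say $R(F) \subseteq N(K)$ and $R(K) \subseteq N(F)$, and these inclusions are exactly what is needed to see that $K$ leaves both summands invariant.

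For (\ref{reducingfiniterank}) and (\ref{matrixform}) I would simply evaluate $T$ on each summand. If $y \in R(F)$, then $Ky = 0$ (since $R(F) \subseteq N(K)$), so $Ty = \alpha y - Fy \in R(F)$; and if $x \in N(F)$, then $Fx = 0$ while $Kx \in R(K) \subseteq N(F)$, so $Tx = \alpha x + Kx \in N(F)$. Thus both $R(F)$ and its orthogonal complement $N(F)$ are invariant under $T$, which is the definition of $R(F)$ reducing $T$, giving (\ref{reducingfiniterank}). Reading off the two restrictions from these same computations yields the block-diagonal form of (\ref{matrixform}), with diagonal entries $K_0 + \alpha I_{N(F)}$ on $N(F)$ and $\alpha I_{R(F)} - F_0$ on $R(F)$, where $K_0 = K|_{N(F)}$ and $F_0 = F|_{R(F)}$.

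For (\ref{blockforminverse}) the first task is invertibility. Since $T$ is one-to-one and $\alpha > 0$, Proposition~\ref{propertiesofAN} gives $\|F\| < \alpha$, whence $\|F_0\| \leq \|F\| < \alpha$ and $\alpha I_{R(F)} - F_0$ is invertible on $R(F)$; the other block satisfies $K_0 + \alpha I_{N(F)} \geq \alpha I_{N(F)} > 0$ and is therefore invertible on $N(F)$. A block-diagonal operator with invertible diagonal blocks is itself invertible, with inverse the block-diagonal of the block inverses, so $T^{-1} \in \mathcal B(H)$ is the diagonal operator with entries $(K_0 + \alpha I_{N(F)})^{-1}$ and $(\alpha I_{R(F)} - F_0)^{-1}$. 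It then remains only to rewrite these two entries in the asserted form, and here I would invoke Lemma~\ref{inversedifference}: applied to the pair $\bigl(K_0 + \alpha I_{N(F)},\, \alpha I_{N(F)}\bigr)$ it yields $(K_0 + \alpha I_{N(F)})^{-1} = \alpha^{-1} I_{N(F)} - \alpha^{-1} K_0 (K_0 + \alpha I_{N(F)})^{-1}$, and applied to the pair $\bigl(\alpha I_{R(F)} - F_0,\, \alpha I_{R(F)}\bigr)$ it yields $(\alpha I_{R(F)} - F_0)^{-1} = \alpha^{-1} I_{R(F)} + \alpha^{-1} F_0 (\alpha I_{R(F)} - F_0)^{-1}$, which are precisely the two diagonal entries in the statement.

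No step here is genuinely hard; the main obstacle is bookkeeping rather than ideas. The point most likely to trip one up is the invertibility of the second block: one must convert the injectivity of $T$ into the norm bound $\|F\| < \alpha$ through Proposition~\ref{propertiesofAN} rather than attempt to argue it directly. The conceptual shortcut that makes (\ref{blockforminverse}) immediate is the recognition that the two entries displayed in the statement are nothing more than instances of Lemma~\ref{inversedifference}, comparing each diagonal block against the scalar operator $\alpha I$ on the corresponding summand.
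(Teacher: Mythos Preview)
Your proposal is correct and follows essentially the same route as the paper: the orthogonal decomposition $H=N(F)\oplus R(F)$, the relations $KF=FK=0$, Proposition~\ref{propertiesofAN}(\ref{injectivecharacterization}) for $\|F\|<\alpha$, and Lemma~\ref{inversedifference} for the final rewriting are exactly the ingredients the paper uses. The only cosmetic differences are that the paper proves (\ref{reducingfiniterank}) by checking invariance of $R(F)$ alone and then invoking positivity of $T$, whereas you check both summands directly; and in (\ref{blockforminverse}) the paper first argues $T^{-1}\in\mathcal B(H)$ from closedness of $R(T)$ plus injectivity before passing to the block form, while you deduce it from invertibility of each diagonal block---but these are variations in exposition, not in substance.
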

\begin{proof}
Proof of (\ref{reducingfiniterank}):   First note that $T\geq 0$ and $T\in \mathcal{AN}(H)$. Let $y=Fx$ for some $x\in H$. Then $Ty=TFx=(K-F+\alpha I)Fx=(\alpha I-F)(Fx)=F(\alpha I-F)x\in R(F)$. This shows that $R(F)$ is invariant under $T$. As $T$ is positive, it follows that $R(F)$ is a reducing subspace for $T$.

  Proof of (\ref{matrixform}):  First,  we show that $K_0$ is a map on $N(F)$. For this we show that $N(F)$ invariant under $K$.  If $x\in N(F)$, then
  $FKx=0$ since $FK=0$. This proves that $N(F)$ is invariant under $K$. Thus $K_0\in \mathcal K(N(F))$. Also, clearly, $R(F)$ is invariant under $F$. Thus $F_0: R(F)\rightarrow R(F)$ is a finite dimensional operator. With respect to the pair of subspaces $(N(F),R(F))$,  $K$ has the decomposition:
  $$\left(
     \begin{array}{cc}
       K_0 & 0 \\
       0 & 0 \\
     \end{array}
   \right)
  .$$
  Similarly the operators $F$ and $\alpha I$ has the following block matrix forms respectively:
  $$\left(
     \begin{array}{cc}
       0 & 0 \\
       0 & F_0 \\
     \end{array}
   \right) \; \; \text{and}\;\; \left(
     \begin{array}{cc}
       \alpha I_{N(F)} & 0 \\
       0 & \alpha I_{R(F)} \\
     \end{array}
   \right).$$

With these representation of $K, F$ and $\alpha I$, by definition, $T$  can be represented as in  (\ref{matrixform}).

Proof of (\ref{blockforminverse}): By (\ref{closedrange}) of Proposition \ref{propertiesofAN}, $R(T)$ is closed. As $T$ is one-to-one, $T$  is bounded below. Since $T$ is positive, $T^{-1}\in \mathcal B(H)$.
 In this case $\|F_0\|=\|F\|<\alpha$, by  (\ref{injectivecharacterization}) of Proposition  \ref{propertiesofAN}. Hence we have
 \begin{equation}\label{blockforminverseeq1}
 T^{-1}=\left(
             \begin{array}{cc}
               (K_0+\alpha I_{N(F)})^{-1} & 0 \\
               0 & (\alpha I_{R(F)}-F_0)^{-1} \\
             \end{array}
           \right).
           \end{equation}

  By Lemma \ref{inversedifference},  we have
  \begin{equation*}
  (K_0+\alpha I_{N(F)})^{-1}-\alpha^{-1} I_{N(F)}=\alpha^{-1}I_{N(F)}-\alpha^{-1}K_0(K_0+\alpha I_{N(F)})^{-1},
  \end{equation*}
  and hence
  \begin{equation*}
  (K_0+\alpha I_{N(F)})^{-1}=\alpha^{-1} I_{N(F)}-\alpha^{-1}I_{N(F)}-\alpha^{-1}K_0(K_0+\alpha I_{N(F)})^{-1}.
    \end{equation*}
    Substituting these quantities in Equation \ref{blockforminverseeq1}, we obtain the representation of $T^{-1}$ as in (\ref{blockforminverse}).
  \end{proof}

  \begin{remark}
  Let
  \begin{align*}
  \beta & =\alpha^{-1},\\
   K_1&= \left(
                                     \begin{array}{cc}
                                       \alpha^{-1}K_0(K_0+\alpha I_{N(F)})^{-1} & 0 \\
                                       0 & 0 \\
                                     \end{array}
                                   \right)\\ \intertext{and}    F_1&=\left(
              \begin{array}{cc}
                 0& 0 \\
                0 & \alpha^{-1}F_0(\alpha I_{R(F)}-F_0)^{-1} \\
              \end{array}
            \right).
            \end{align*}

             Then $T^{-1}=\beta I-K_1+F_1$. Note that $\|K_1\|\leq \beta$, since $\|K_0(\alpha I_{N(F)}+K_0)^{-1}\|\leq 1$. Clearly, by definition, $K_1F_1=0$. This is exactly, the structure of absolutely minimum attaining  operators (shortly $\mathcal {AM}$-operators) in case when $T$ is positive and one-to-one. We refer \cite{GRS2} for more details of the structure of these operators. We recall that  $A\in \mathcal B(H_1,H_2)$ is said to be minimum attaining if there exists $x_0\in S_{H_1}$ such that $\|Ax_0\|=m(A)$ and absolutely minimum attaining if $A|_{M}$ is minimum attaining for each non zero closed subspace $M$ of $H_1$.
  \end{remark}

\begin{proposition}
Let $T\in \mathcal B(H)$ be satisfying conditions in Theorem  \ref{sharpcharacterization}. Then with respect the  pair of subspace $(N(K), N(K)^{\bot})$, $T$ has the following decomposition:
$$T=\left(
      \begin{array}{cc}
        \alpha I_{N(K)}-F_0 & 0 \\
        0 & K_0+\alpha I_{N(K)^{\bot}} \\
      \end{array}
    \right),
$$
where $F_0=F|_{N(K)}$ and $K_0=K|_{N(K)^{\bot}}$.
\end{proposition}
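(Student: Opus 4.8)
The plan is to verify directly that each of the three summands $K$, $F$, and $\alpha I$ leaves both $N(K)$ and $N(K)^{\bot}$ invariant, so that $N(K)$ reduces $T$ and the off-diagonal blocks vanish; the diagonal entries can then simply be read off. The entire argument rests on exploiting the orthogonality condition $KF=0$ together with the self-adjointness of $K$ and $F$.

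The first step is to note that since $K$ and $F$ are positive, hence self-adjoint, the hypothesis $KF=0$ forces $FK=(KF)^{*}=0$ as well. From $KF=0$ I get $R(F)\subseteq N(K)$, and from $FK=0$ I get $R(K)\subseteq N(F)$. Because $F$ is bounded, $N(F)$ is closed, so this second inclusion upgrades to $\overline{R(K)}\subseteq N(F)$; and since $K$ is self-adjoint we have $N(K)^{\bot}=\overline{R(K)}$, whence $N(K)^{\bot}\subseteq N(F)$. In other words, $F$ vanishes identically on $N(K)^{\bot}$. On the other hand, $R(F)\subseteq N(K)$ shows that $F$ maps $N(K)$ into $N(K)$, so that $F_{0}:=F|_{N(K)}$ is a well-defined operator on $N(K)$.

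With these observations the block form is immediate. Self-adjointness of $K$ gives that $N(K)$ reduces $K$, with $K|_{N(K)}=0$ and $K_{0}:=K|_{N(K)^{\bot}}$ a positive compact operator on $N(K)^{\bot}$, while $\alpha I$ reduces every closed subspace. Thus each of $K$, $F$, $\alpha I$ leaves $N(K)$ and $N(K)^{\bot}$ invariant, so $N(K)$ reduces $T$ and the off-diagonal entries are zero. Restricting $T=K-F+\alpha I$ to $N(K)$ yields $\alpha I_{N(K)}-F_{0}$ (as $K$ annihilates $N(K)$), and restricting to $N(K)^{\bot}$ yields $K_{0}+\alpha I_{N(K)^{\bot}}$ (as $F$ annihilates $N(K)^{\bot}$), which is precisely the asserted decomposition.

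There is no genuinely hard analytic step here; the only point demanding care is the passage from $FK=0$ to $N(K)^{\bot}\subseteq N(F)$, which requires invoking both the closedness of $N(F)$ and the identity $N(K)^{\bot}=\overline{R(K)}$ valid for self-adjoint $K$. The remainder is bookkeeping, entirely parallel to the decomposition with respect to the pair $(N(F),R(F))$ established in the preceding theorem.
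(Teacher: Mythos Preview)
Your proof is correct and follows essentially the same approach as the paper. The paper shows $N(K)$ is invariant under $T$ directly (then invokes positivity of $T$ to get reducing), and obtains $F|_{N(K)^{\bot}}=0$ via a sequence argument using $FK=0$; you instead show each summand $K$, $F$, $\alpha I$ separately leaves both $N(K)$ and $N(K)^{\bot}$ invariant, but the underlying ingredients---$FK=(KF)^*=0$, $N(K)^{\bot}=\overline{R(K)}\subseteq N(F)$, and self-adjointness---are identical.
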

\begin{proof}
First we show that $N(K)$ is a reducing subspace for $T$. We know by Theorem \ref{sharpcharacterization}, that $T$ is positive. Hence  it suffices to show that $N(K)$ is invariant under $T$. For this, let $x\in N(K)$. Then $Tx=(\alpha I-F)(x)$
and $K(Tx)=(\alpha I-F)(Kx)=0$. This proves the claim. Next, if $x\in N(K)$, then $Tx=(\alpha I-F)(x)$. That is $T|_{N(K)}=I_{N(K)}-F|_{N(K)}$.

If $y\in N(K)^{\bot}=\overline{R(K)}$, then there exists a sequence $(x_n)\subset H$ such that $y=\displaystyle \lim_{n\rightarrow \infty}Kx_n$. So $Fy=\displaystyle \lim_{n\rightarrow  \infty}FKx_n=0$. Thus we have $Ty=Ky+\alpha y$. So $T|_{N(K)^{\bot}}=K_{N(K)^{\bot}}+\alpha I_{N(K)^{\bot}}$.
\end{proof}

\section{Self-adjoint and Normal $\mathcal{AN}$-operators}

In this section,  first we discuss the structure of self-adjoint $\mathcal{AN}$-operators. Later, we extend this to the case of normal operators.

\begin{theorem}\label{diagonalizabilityselfadj}
Let $T=T^*\in \mathcal {AN}(H)$. Then there exists an orthonormal basis consisting of eigenvectors of $T$.
\end{theorem}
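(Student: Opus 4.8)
The plan is to reduce the self-adjoint case to the already-understood positive case by passing to $T^2$. Since $T=T^*$ we have $T^*T=T^2$, which is positive, and by Corollary \ref{gramchar} the hypothesis $T\in\mathcal{AN}(H)$ is equivalent to $T^2\in\mathcal{AN}(H)$. Thus $T^2$ is a positive $\mathcal{AN}$-operator, so Theorem \ref{SPthm} applies and $H$ admits an orthonormal basis consisting of eigenvectors of $T^2$. Equivalently, writing $E_\mu:=N(T^2-\mu I)$ for the eigenspaces of $T^2$, we obtain the orthogonal decomposition $H=\bigoplus_{\mu}E_\mu$, the sum running over the nonnegative eigenvalues $\mu$ of $T^2$.

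The core of the argument is then to refine this into a basis of eigenvectors of $T$ itself, for which the only real issue is the sign ambiguity in passing from $\mu$ to $\pm\sqrt{\mu}$. First I would observe that each $E_\mu$ reduces $T$: since $T$ commutes with $T^2$ it leaves every eigenspace of the self-adjoint operator $T^2$ invariant, and as $T$ is self-adjoint these subspaces are in fact reducing. Consequently $T|_{E_\mu}$ is a self-adjoint operator on $E_\mu$ satisfying $(T|_{E_\mu})^2=\mu\,I_{E_\mu}$.

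For $\mu=0$, the identity $N(T^2)=N(T)$ (valid for self-adjoint $T$, since $T^2x=0$ gives $\|Tx\|^2=\langle T^2x,x\rangle=0$) shows that $E_0=N(T)$ consists entirely of eigenvectors of $T$ for the eigenvalue $0$. For $\mu>0$, I would introduce $P_\mu:=\tfrac12\bigl(I_{E_\mu}+\mu^{-1/2}\,T|_{E_\mu}\bigr)$ and verify, using $(T|_{E_\mu})^2=\mu I_{E_\mu}$, that $P_\mu^2=P_\mu$; since $P_\mu$ is self-adjoint it is an orthogonal projection whose range and kernel are precisely $N(T-\sqrt{\mu}\,I)$ and $N(T+\sqrt{\mu}\,I)$. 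Hence $E_\mu$ splits orthogonally into the $+\sqrt{\mu}$ and $-\sqrt{\mu}$ eigenspaces of $T$.

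Finally, I would choose an orthonormal basis of each eigenspace obtained above (the two eigenspaces inside each $E_\mu$ with $\mu>0$, together with $E_0$ for $\mu=0$); their union is orthonormal, because distinct eigenspaces of the self-adjoint $T$ are mutually orthogonal, and it is total in $H$ because $\bigoplus_\mu E_\mu=H$. This yields an orthonormal basis of $H$ consisting of eigenvectors of $T$. The one delicate point is the sign splitting within each $E_\mu$, which the projection $P_\mu$ resolves cleanly; everything else is a direct application of Corollary \ref{gramchar} and Theorem \ref{SPthm}.
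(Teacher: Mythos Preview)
Your argument is correct, but it takes a different route from the paper. The paper repeats the Zorn/maximality argument of \cite[Theorem~3.1]{SP} directly for self-adjoint $T$: one takes a maximal orthonormal system $\mathcal B$ of eigenvectors of $T$, sets $M=\overline{\operatorname{span}}\,\mathcal B$, and observes that if $M\neq H$ then $M^\bot$ is $T$-invariant, so $T|_{M^\bot}$ is self-adjoint and norm attaining; by Proposition~\ref{normattainingeigenvalue} either $\|T|_{M^\bot}\|$ or $-\|T|_{M^\bot}\|$ is an eigenvalue, producing a new eigenvector in $M^\bot$ and contradicting maximality.

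By contrast, you reduce to the already-established positive case via $T^2$ and then perform the sign splitting with the projections $P_\mu$. This is clean and avoids rerunning the maximality argument, but it leans on more infrastructure (Corollary~\ref{gramchar}, hence Theorem~\ref{sharpcharacterization}, together with Theorem~\ref{SPthm}). The paper's proof is more self-contained, needing only Proposition~\ref{normattainingeigenvalue} and the invariance of eigenspaces; yours has the virtue of making explicit how the self-adjoint diagonalization is literally a refinement of the positive one.
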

\begin{proof}
The proof follows in the similar lines of \cite[Theorem 3.1]{SP}. For the sake of completeness we provide the details here.
Let $\mathcal B=\{x_\alpha: \alpha\in I\}$ be the maximal set of orthonormal eigenvectors of $T.$ This set is non empty, as $T=T^*\in \mathcal{AN}(H)$. Let $M=\overline{\mbox{span}}\{x_\alpha: \alpha \in I\}$. Then we claim  that $M=H$.
If not, $M^{\bot}$ is a proper non-zero closed subspace of $H$ and it is invariant under $T$.  Since $T=T^* \in \mathcal{AN}(H)$,  then we have either $||T|M^{\bot}||$ or $-||T|M^{\bot}||$ is an eigenvalue for $T|M^{\bot}$. Hence there is a
non-zero vector, say $x_0$ in $M^{\bot}$, such that $Tx_0=\pm ||T|M^{\bot}|| x_0.$ Since $M\cap M^{\bot}={\{0}\},$ we have arrived to a contradiction to the  maximality of $\mathcal B$.
\end{proof}
\begin{proposition}\label{eigenvaluepossibilities}
Let $T=T^*\in \mathcal {AN}(H)$. Then the following holds:
\begin{enumerate}
\item \label{infinitemult} $T$ can have atmost two eigenvalues with infinite multiplicity. Moreover, if $\alpha$ and $\beta$ are such eigenvalues, then $\alpha=\pm\beta$
\item \label{ltptinfinitemult}  if $T$ has an eigenvalue $\alpha$ with infinite multiplicity and $\beta$ is a limit point of $\sigma(T)$, then $\alpha=\pm\beta$
\item  \label{ltptunique}$\sigma(T)$ can have atmost two limit points. If $\alpha$ and $\beta$ are such points, then $\alpha=\pm\beta$.
\end{enumerate}
\end{proposition}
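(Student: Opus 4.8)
The plan is to reduce every assertion to the already-understood positive case by passing to $T^2$. Since $T=T^*\in\mathcal{AN}(H)$, Corollary \ref{gramchar} (applied with $H_1=H_2=H$, so that $T^*T=T^2$) shows that $T^2$ is positive and lies in $\mathcal{AN}(H)$. Hence Theorem \ref{SPthm} applies to $T^2$, and the whole argument amounts to transporting the three relevant kinds of spectral data---eigenvalues of infinite multiplicity, limit points of the spectrum, and the compatibility between them---back and forth across the map $\lambda\mapsto\lambda^2$. The two elementary facts I would use repeatedly are the spectral mapping identity $\sigma(T^2)=\{\lambda^2:\lambda\in\sigma(T)\}$ and the fact that this map is at most two-to-one, with $\lambda^2=\mu^2$ forcing $\lambda=\pm\mu$.

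For statement (\ref{infinitemult}) I would first record the transfer of infinite multiplicity: if $\lambda\in\sigma_p(T)$ with $\dim N(T-\lambda I)=\infty$, then $N(T-\lambda I)\subseteq N(T^2-\lambda^2 I)$, so $\lambda^2$ is an eigenvalue of $T^2$ of infinite multiplicity. By (\ref{uniqueeigenvalueinfinitemult}) of Theorem \ref{SPthm}, $T^2$ has at most one such eigenvalue. Consequently, if $\alpha,\beta$ are infinite-multiplicity eigenvalues of $T$ then $\alpha^2=\beta^2$, i.e. $\alpha=\pm\beta$. The ``at most two'' bound then follows by counting: all such eigenvalues of $T$ square to the single value permitted for $T^2$, and the fibre of $\lambda\mapsto\lambda^2$ over a point has at most two elements, so $T$ can have at most two eigenvalues of infinite multiplicity.

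For the limit-point statements (\ref{ltptinfinitemult}) and (\ref{ltptunique}), the key lemma I need is that if $\beta$ is a limit point of $\sigma(T)$ then $\beta^2$ is a limit point of $\sigma(T^2)$. Choosing $\lambda_n\in\sigma(T)$ with $\lambda_n\to\beta$ and $\lambda_n\neq\beta$, spectral mapping gives $\lambda_n^2\in\sigma(T^2)$ and $\lambda_n^2\to\beta^2$; the only delicate point is to ensure $\lambda_n^2\neq\beta^2$ for infinitely many $n$. If $\beta=0$ this is immediate, since $\lambda_n\neq0$ forces $\lambda_n^2>0$; if $\beta\neq0$ then $\lambda_n$ is eventually bounded away from $-\beta$, so $\lambda_n\neq\pm\beta$ for large $n$ and hence $\lambda_n^2\neq\beta^2$. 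Granting this, statement (\ref{ltptunique}) follows from (\ref{uniqueltpt-SP}) of Theorem \ref{SPthm} (at most one limit point for $\sigma(T^2)$) by the same two-to-one counting as above, and statement (\ref{ltptinfinitemult}) follows from (\ref{ltptinfiniteeigevalue}) of Theorem \ref{SPthm}: an infinite-multiplicity eigenvalue $\alpha$ of $T$ and a limit point $\beta$ of $\sigma(T)$ produce, respectively, an infinite-multiplicity eigenvalue $\alpha^2$ and a limit point $\beta^2$ of $\sigma(T^2)$, which must then coincide, giving $\alpha=\pm\beta$.

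I expect the only genuinely delicate step to be this limit-point transfer, specifically verifying $\lambda_n^2\neq\beta^2$ along a subsequence so that $\beta^2$ is a true accumulation point of $\sigma(T^2)$ rather than merely an isolated eigenvalue; the two sign cases $\beta=0$ and $\beta\neq0$ must be separated as indicated. Everything else is bookkeeping on the two-to-one covering $\lambda\mapsto\lambda^2$, which converts each uniqueness statement for $T^2$ supplied by Theorem \ref{SPthm} into the corresponding ``at most two, and equal up to sign'' statement for $T$.
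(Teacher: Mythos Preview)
Your proposal is correct and follows essentially the same route as the paper: pass to the positive operator $T^2\in\mathcal{AN}(H)$ via Corollary~\ref{gramchar}, apply Theorem~\ref{SPthm}, and pull the three uniqueness statements back across the two-to-one map $\lambda\mapsto\lambda^2$. You are in fact more careful than the paper on the one delicate point, namely verifying that a limit point $\beta$ of $\sigma(T)$ really produces a limit point $\beta^2$ of $\sigma(T^2)$; the paper simply asserts this, whereas your case split $\beta=0$ versus $\beta\neq 0$ supplies the missing justification.
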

\begin{proof}
Proof of (\ref{infinitemult}): Let $\alpha_j\in \sigma_p(T)$ be such that $N(T-\alpha_jI)$ is infinite dimensional for each $j=1,2,3$.  Then $\alpha_j^2\in N(T^2)$ and  we have
$N(T-\alpha_jI)\subseteq N(T^2-\alpha_j^2I)$ for each $j=1,2,3$. Since $T^2\in \mathcal{AN}(H)$ and positive,  by (\ref{uniqueeigenvalueinfinitemult}) of Theorem \ref{SPthm}, it follows that $\alpha_1^2=\alpha_2^2=\alpha_3^2$. Thus $\alpha_1=\pm \alpha_2=\pm \alpha_3$.

Proof of (\ref{ltptinfinitemult}): Let $\alpha \in \sigma_p(T)$ with infinite multiplicity and $\beta \in \sigma(T)$, which is a limit point. Since $\sigma(T^2)={\{\lambda^2:\lambda \in \sigma(T)}\}$, it follows that $\alpha^2$ is an eigenvalue of
$T^2$ with infinite multiplicity as $N(T-\alpha I)\subseteq N(T^2-\alpha^2I)$ and $\beta^2$ is a limit point $\sigma(T^2)$. Since $T^2\in \mathcal{AN}(H)$ is positive, by (\ref{ltptinfiniteeigevalue}) of Theorem  (\ref{SPthm}), $\alpha^2=\beta^2$. Thus $\alpha=\pm \beta$.

Proof of (\ref{ltptunique}): Let $\alpha,\beta \in \sigma(T)$ be limit points of $\sigma(T)$. Then $\alpha^2,\beta^2\in \sigma(T^2)$ are limit points of $\sigma(T^2)$ and since $T^2\in \mathcal{AN}(H)$ and positive, by  (\ref{uniqueltpt-SP}) of Theorem \ref{SPthm}, $\alpha^2=\beta^2$, concluding $\alpha=\pm \beta$. By arguing as in Proof of (\ref{infinitemult}), we can show that there are at most two limit points for the spectrum.
\end{proof}




Let $T=T^*\in \mathcal B(H)$ and have the polar decomposition $T=V|T|$. Let $H_0=N(T),\; H_{+}=N(I-V)$ and $H_{-}=N(I+V)$. Then $H=H_0\oplus H_{+}\oplus H_{-}$. All these subspaces are invariant under $T$. Let $T_0=T|_{N(T)},\; T_{+}=T|_{H_{+}}$ and $T_ {-}=T|_{H_{-}}$. Then $T=T_0\oplus T_{+}\oplus T_{-}$. Further more, $T+$ is strictly positive, $T_{-}$ is strictly negative and $T_0=0$ if $N(T)\neq {\{0}\}$.  Let $P_0=P_{N(T)},\; P_{\pm}=P_{H_{\pm}}$. Then $P_0=I-V^2$ and $P_{\pm}=\frac{1}{2}(V^2\pm V)$. Thus $V=P_{+}-P_{-}$. For details see \cite[Example 7.1, page 139]{schmudgen}.  Note that the operators $T_{+}$ and $T_{-}$ are different than those used in Theorem \ref{sharpcharacterization}.
\begin{theorem}\label{selfadjstructure}
Let $T\in \mathcal {AN}(H)$ be self-adjoint with the polar decomposition $T=V|T|$. Then
\begin{enumerate}
\item \label{selfadjchar} the operator $T$ has the representation:
\begin{equation*}
T=K-F+\alpha V,
\end{equation*}
where $K\in \mathcal K(H),\; F\in \mathcal F(H)$ are self-adjoint  with $KF=0$ and $F^2\leq \alpha^2I$
\item \label{partialisometryANprop}if $T$ is not a compact operator, then $V\in \mathcal{AN}(H)$
\item \label{cptisometryreln} $K^2+2\alpha \text{Re}(VK)\geq 0$.
\end{enumerate}
\end{theorem}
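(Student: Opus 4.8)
The plan is to reduce the whole statement to the structure theorem for positive $\mathcal{AN}$-operators applied to $|T|$, and then transport that structure across the polar decomposition $T=V|T|$.

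First I would record the input from the positive case. Since $T=T^{*}\in\mathcal{AN}(H)$, Corollary \ref{gramchar} gives $|T|=(T^{*}T)^{1/2}\in\mathcal{AN}(H)$, and $|T|$ is positive, so Theorem \ref{sharpcharacterization} supplies a unique triple $(K',F',\alpha)$ with $K'$ positive compact, $F'$ positive finite rank, $K'F'=0$, $0\le F'\le\alpha I$, and $|T|=K'-F'+\alpha I$, where $\alpha=m_{e}(|T|)$. The decisive point, which I expect to be the main obstacle, is to show that $V$ commutes with $K'$ and $F'$. For this I would note that both $V=\operatorname{sgn}(T)=P_{+}-P_{-}$ and $|T|$ are Borel functions of the self-adjoint operator $T$, hence commute; then I would observe that the identity $K'-F'=|T|-\alpha I$ together with $K',F'\ge 0$ and $K'F'=0$ is precisely the Jordan decomposition of the self-adjoint operator $|T|-\alpha I$, so that $K'=(|T|-\alpha I)^{+}$ and $F'=(|T|-\alpha I)^{-}$ are themselves functions of $|T|$, and therefore of $T$. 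Since all Borel functions of a fixed self-adjoint operator commute pairwise, $V$ commutes with both $K'$ and $F'$.

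With this commutation in hand, part (1) follows by setting $K:=VK'$ and $F:=VF'$. Both are self-adjoint, since $V$ is self-adjoint and commutes with the self-adjoint operators $K',F'$ (so e.g. $F^{*}=F'V=VF'=F$); moreover $K$ is compact as a product of a bounded and a compact operator, $F$ is finite rank because $R(F)\subseteq V\,R(F')$, and $T=V|T|=VK'-VF'+\alpha V=K-F+\alpha V$. The relation $KF=V^{2}K'F'=0$ is immediate, and since $F$ is self-adjoint with $\|F\|=\|VF'\|\le\|F'\|\le\alpha$, we obtain $F^{2}\le\|F\|^{2}I\le\alpha^{2}I$.

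For part (2), if $T$ is not compact then neither is $|T|$, which forces $\alpha>0$: otherwise the remark following Theorem \ref{sharpcharacterization} would give $F'=0$ and $|T|=K'$ compact, whence $T=VK'$ would be compact. Then Proposition \ref{propertiesofAN} (the finite-dimensional null space statement) applied to $|T|$ shows that $N(T)=N(|T|)$ is finite dimensional, so $V$ is a partial isometry with finite-dimensional null space and hence lies in $\mathcal{AN}(H)$ by the fact recorded in the introduction. Finally, for part (3) I would simply compute, using $VK=KV=V^{2}K'$ (so that $\operatorname{Re}(VK)=V^{2}K'$) and $K^{2}=V^{2}K'^{2}$, that $K^{2}+2\alpha\,\text{Re}(VK)=V^{2}\bigl(K'^{2}+2\alpha K'\bigr)=V^{2}K'(K'+2\alpha I)$, which is a product of commuting positive operators and is therefore positive. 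The only genuine difficulty is the commutation step identifying $K',F'$ as functions of $|T|$; once $V$ is known to commute with $K'$ and $F'$, the remaining verifications are routine algebra.
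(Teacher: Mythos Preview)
Your proof is correct and takes a genuinely different route from the paper's.

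The paper proves part~(1) by decomposing $H=H_{+}\oplus H_{-}\oplus H_{0}$ according to the sign of $T$ (using $V=P_{+}-P_{-}$), applying Theorem~\ref{sharpcharacterization} separately to the strictly positive operator $T_{+}$ and to $-T_{-}$ to obtain triples $(K_{+},F_{+},\alpha)$ and $(K_{-},F_{-},\beta)$, and then comparing these blocks with the decomposition of $|T|$ to force $\alpha=\beta$ via the \emph{uniqueness} clause of Theorem~\ref{sharpcharacterization}. The operators $K$ and $F$ are then assembled as block-diagonal matrices, and part~(3) is checked blockwise.

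You instead apply Theorem~\ref{sharpcharacterization} once to $|T|$ and then establish the commutation $VK'=K'V$, $VF'=F'V$ by recognising $K'=(|T|-\alpha I)^{+}$ and $F'=(|T|-\alpha I)^{-}$ as the Jordan parts of $|T|-\alpha I$, hence as Borel functions of $T$. This is a cleaner and more conceptual argument: it avoids the case analysis on injectivity, the block bookkeeping, and the separate verification that the two scalars coincide. It is in fact closer in spirit to the paper's own treatment of the \emph{normal} case (Proposition~\ref{normalstructure}), where the commutation of $V$ with $K_{1},F_{1}$ is obtained from $V|T|=|T|V$ via the trick $(K_{1}-F_{1})^{2}=(K_{1}+F_{1})^{2}$; your Jordan-decomposition observation gives this more directly. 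What the paper's block approach buys is perhaps a more explicit picture of how $K$ and $F$ sit relative to the positive and negative spectral subspaces of $T$, which feeds naturally into its proof of part~(3); your computation $K^{2}+2\alpha\,\mathrm{Re}(VK)=V^{2}K'(K'+2\alpha I)$ reaches the same conclusion in one line. Parts~(2) in both approaches are essentially identical.
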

\begin{proof}

Proof of (\ref{selfadjchar}): We prove this in two cases;\\
Case $1:$ $T$ one-to-one: In this case $H=H_{+}\oplus H_{-}$ and $T=T_{+}\oplus T_{-}$. Since $H_{\pm}$ reduces $T$, we have $T_{\pm}\in \mathcal B(H_{\pm})$. As $T\in \mathcal {AN}(H)$, we have that $T_{\pm}\in \mathcal{AN}(H_{\pm})$.
Hence By Theorem \ref{sharpcharacterization}, we have that $T_{+}=K_{+}-F_{+}+\alpha I_{H_{+}}$ such that $K_{+}$ is positive compact operator, $F_{+}$ is finite rank positive operator with the property that $K_{+}F_{+}=0$ and $F_{+}\leq \alpha I_{H_{+}} $. As $T_{+}$ is strictly positive, $\alpha>0$.

Similarly, $T_{-}\in \mathcal {AN}(H_{-})$ and strictly negative. Hence there exists a triple $(K_{-},F_{-},\beta)$ such that
$-T_{-}=K_{-}-F_{-}+\beta I_{H_{-}}$, where $K_{-}\in \mathcal K(H_{-})$ is positive, $F_{-}\in \mathcal F(H_{-})$ is positive with $K_{-}F_{-}=0,\; F_{-}\leq \beta I_{H_{-}}$ and  $\beta>0$.  Hence we can write $T_{-}=-K_{-}+F_{-}-\beta I_{H_{-}}$ and
\begin{align*}
T=\left(
    \begin{array}{cc}
      T_{+} & 0 \\
      0 & T_{-} \\
    \end{array}
  \right)&=\left(
            \begin{array}{cc}
              K_{+}-F_{+}+\alpha I_{H_{+}} & 0 \\
              0 & -K_{-}+F_{-}-\beta I_{H_{-}} \\
            \end{array}
          \right)\\
                  &=\left(
              \begin{array}{cc}
               K_{+} & 0 \\
                0 &  -K_{-} \\
              \end{array}
            \right)-\left(
                      \begin{array}{cc}
                        -F_{+} &0 \\
                        0& F_{-} \\
                      \end{array}
                    \right)+\left(
                              \begin{array}{cc}
                                \alpha I_{H_{+}} & 0 \\
                                0& \beta I_{H_{-}} \\
                              \end{array}
                            \right).
\end{align*}

We also have that
\begin{align*}
|T|&=\left(
    \begin{array}{cc}
      T_{+} & 0 \\
      0 & -T_{-} \\
    \end{array}
  \right)\\
  &=\left(
              \begin{array}{cc}
               K_{+} & 0 \\
                0 &  K_{-} \\
              \end{array}
            \right)-\left(
                      \begin{array}{cc}
                        F_{+} &0 \\
                        0& F_{-} \\
                      \end{array}
                    \right)+\left(
                              \begin{array}{cc}
                                \alpha I_{H_{+}} & 0 \\
                                0& \beta I_{H_{-}} \\
                              \end{array}
                            \right).
                            \end{align*}

                            Let  $K_1:=\left(
              \begin{array}{cc}
               K_{+} & 0 \\
                0 &  K_{-} \\
              \end{array}
            \right)$ and $F_1:=\left(
                      \begin{array}{cc}
                        F_{+} &0 \\
                        0& F_{-} \\
                      \end{array}
                    \right)$.
Then
\begin{equation*}
|T|=K_1-F_1-\left(
                              \begin{array}{cc}
                                \alpha I_{H_{+}} & 0 \\
                                0& \beta I_{H_{-}} \\
                              \end{array}
                            \right).
                            \end{equation*}
                            Clearly, $K_1F_1=0$, both $K_1$ and $F_1$ are positive, $F\leq \max{\{\alpha, \beta}\}I$.
                             By the uniqueness of the decomposition (see Theorem \ref{sharpcharacterization}), if $|T|=K_2-F_2+\gamma I$, then   we can conclude that  $K_1=K_2,\; F_1=F_2$ and $\alpha=\beta=\gamma$.   With this observation,  we have that

$\left(
   \begin{array}{cc}
     \alpha I_{H_{+}} & 0 \\
     0 & \beta I_{H_{-}} \\
   \end{array}
 \right)=\alpha(P_{+}-P_{-})=\alpha V$.

 Now taking $K:=\left(
              \begin{array}{cc}
               K_{+} & 0 \\
                0 &  -K_{-} \\
              \end{array}
            \right)$, $F:=\left(
                      \begin{array}{cc}
                        -F_{+} &0 \\
                        0& F_{-} \\
                      \end{array}
                    \right)$, we can write $T=K-F+\alpha V$.  Here $K$ is self-adjoint compact operator, $F$ is a self-adjoint finite rank operator with $KF=0$. Finally, it is easy to verify that $F^2\leq \alpha^2I$.

                    Next, we show that $V\in \mathcal{AN}(H)$. Since $T^{-1}$ exists, $T$ cannot be compact.  It suffices to prove $V^2\in \mathcal{AN}(H)$. We have $V^2=P_{+}+P_{-}=P_{R(T)}=I\in \mathcal{AN}(H)$.

 \noindent Case 2; $T$ need not be one-to-one:  In this case $T_0=0$ and $T=T_0\oplus T_{+}\oplus T_{-}$. Since  all the operators $T_0,T_{+}$ and $T_{-}$ are $\mathcal{AN}$-operators, we have that
 \begin{align*}
 T&=\left(
                               \begin{array}{ccc}
                                 T_{+} & 0 & 0 \\
                                 0 & T_{-} & 0 \\
                                 0 & 0 & T_0 \\
                               \end{array}
                             \right)\\
                             &=\left(
                               \begin{array}{ccc}
                                  K_{+}-F_{+}+\alpha I_{H_{+}}& 0 & 0 \\
                                 0 & -K_{-}+F_{-}-\beta I_{H_{-}} & 0 \\
                                 0 & 0 & 0 \\
                               \end{array}
                             \right)\\
                             &=\left(
                                 \begin{array}{ccc}
                                   K_{+} &0 &0 \\
                                  0 & K_{-} & 0 \\
                                   0 & 0 & 0 \\
                                 \end{array}
                               \right)-\left(
                                 \begin{array}{ccc}
                                   -F_{+} & 0 & 0 \\
                                   0 &F_{-} & 0 \\
                                   0 & 0 & 0 \\
                                 \end{array}
                               \right)+\left(
                                 \begin{array}{ccc}
                                   \alpha I_{H_{-}} &0 & 0 \\
                                   0 & -\alpha I_{H_{-}} &0 \\
                                   0 &0 &0 \\
                                 \end{array}
                               \right).
                              \end{align*}
(Following the same arguments as in Case $(1)$, we can show that $\alpha=\beta$)

Let $K=\left(
                                 \begin{array}{ccc}
                                   K_{+} &0 &0 \\
                                  0 & K_{-} & 0 \\
                                   0 & 0 & 0 \\
                                 \end{array}
                               \right)$ and $F=\left(
                                 \begin{array}{ccc}
                                   -F_{+} & 0 & 0 \\
                                   0 &F_{-} & 0 \\
                                   0 & 0 & 0 \\
                                 \end{array}
                               \right)$. Clearly, $V=\left(
                                 \begin{array}{ccc}
                                   I_{H_{+}} &0 & 0 \\
                                   0 & - I_{H_{-}} &0 \\
                                   0 &0 &0 \\
                                 \end{array}
                               \right)$.
Then $T=K-F+\alpha V$ and $K$ and $F$ satisfy the stated properties.

Proof of (\ref{partialisometryANprop}): Note that if $\alpha=0$, then $T$ is compact. If $\alpha>0$ and $V$ is a finite rank operator, then also $T$ can be compact. Hence assume that $\alpha>0$ and $R(V)$ is infinite dimensional. But by Theorem (\ref{propertiesofAN}), $N(T)=N(V)$ is finite dimensional.  So the conclusion follows by \cite[Proposition 3.14]{carvajalneves}.

Proof of (\ref{cptisometryreln}): As $VK=KV$, $KV$ is  self-adjoint. Hence  $K^2+2Re(V^*K)=K^2+2VK$.  Thus
\begin{equation*}
K^2+2VK=\left(
                                 \begin{array}{ccc}
                                   K_{+}^2+2K_{+} &0 &0 \\
                                  0 & K_{-}^2-2K_{-} & 0 \\
                                   0 & 0 & 0 \\
                                 \end{array}
                               \right).
\end{equation*}
Since the $(1,1)$ entry of the above matrix is positive, to get the conclusion, it suffices to prove that the $(2,2)$ entry is positive. Clearly, $K_{-}^2-2K_{-}$ is self-adjoint. Next, we show that $\sigma(K_{-}^2-2K_{-})$ is positive. Let $\lambda \in \sigma(K_{-})$. Then $\lambda \leq 0$ and $\lambda^2-2\lambda \in \sigma(K_{-}^2-2K_{-})$. But $\lambda^2-2\lambda=\lambda(\lambda-2)\geq 0$. Hence $K_{-}^2-2K_{-}$ is positive.
 \end{proof}

 \begin{corollary}
  Let $T=T^*\in \mathcal{AN}(H)$. Then $\sigma(T)$ is countable.
  \end{corollary}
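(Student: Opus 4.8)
The plan is to reduce the self-adjoint case to the positive $\mathcal{AN}$ case by passing to $T^2$ and then applying the structural information already recorded in Theorem \ref{SPthm}. First I would observe that since $T=T^*$, we have $T^2=T^*T$, which is positive; moreover, by Corollary \ref{gramchar}, the hypothesis $T\in\mathcal{AN}(H)$ is equivalent to $T^*T\in\mathcal{AN}(H)$. Thus $T^2$ is a positive $\mathcal{AN}$-operator, and the whole apparatus of Theorem \ref{SPthm} becomes available for it.

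Next I would extract countability of $\sigma(T^2)$ directly from Theorem \ref{SPthm}. Part (\ref{uniqueeigenvalueinfinitemult}) of that theorem guarantees that the set of distinct eigenvalues of $T^2$ (without counting multiplicities) is countable, while part (\ref{uniqueltpt-SP}) tells us that $\sigma(T^2)$ is precisely the closure of this eigenvalue set and has at most one limit point. Since the closure of any set is the union of that set with its set of limit points, $\sigma(T^2)$ is the union of a countable set with at most one additional point, hence countable.

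Finally I would transfer countability back to $\sigma(T)$ via the spectral mapping theorem, which yields $\sigma(T^2)=\{\lambda^2:\lambda\in\sigma(T)\}$. Because $T$ is self-adjoint, $\sigma(T)\subseteq\mathbb{R}$, so the squaring map $\lambda\mapsto\lambda^2$ restricted to $\sigma(T)$ is at most two-to-one, the only real solutions of $\lambda^2=\mu$ being $\pm\sqrt{\mu}$. A set mapping finite-to-one onto a countable set is itself countable, so $\sigma(T)$ is countable.

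The only step requiring genuine care, and the main (if mild) obstacle, is the middle one: correctly reading off from Theorem \ref{SPthm} that the \emph{entire} spectrum of $T^2$, and not merely its eigenvalues, is countable. This hinges on the fact, built into that theorem, that for a positive $\mathcal{AN}$-operator the spectrum coincides with the closure of the countable eigenvalue set and acquires at most one limit point; in particular no uncountable continuous spectrum can occur. Once this is granted, the self-adjoint-to-positive reduction and the two-to-one fiber count are routine.
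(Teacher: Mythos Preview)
Your proof is correct, but it takes a different route from the paper's. The paper uses the polar-decomposition splitting $T=T_0\oplus T_{+}\oplus T_{-}$ into zero, strictly positive, and strictly negative parts (set up just before Theorem~\ref{selfadjstructure}); since each summand acts on a reducing subspace, each is itself $\mathcal{AN}$, and then $T_{+}$, $-T_{-}$, and $T_0$ are positive $\mathcal{AN}$-operators whose spectra are countable by Theorem~\ref{SPthm}, whence $\sigma(T)=\sigma(T_{+})\cup\sigma(T_{-})\cup\sigma(T_0)$ is countable. Your argument bypasses this decomposition entirely by squaring: $T^2=T^*T$ is positive $\mathcal{AN}$ by Corollary~\ref{gramchar}, its spectrum is countable by Theorem~\ref{SPthm}, and the spectral mapping theorem plus the at-most-two-to-one nature of $\lambda\mapsto\lambda^2$ on $\mathbb{R}$ pulls countability back to $\sigma(T)$. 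Your approach is more self-contained, needing neither the polar splitting nor the observation that restrictions to reducing subspaces preserve the $\mathcal{AN}$ property; it is also very much in the spirit of Proposition~\ref{eigenvaluepossibilities}, which uses the same reduction to $T^2$. The paper's route, on the other hand, gives slightly finer information by locating the spectrum within the three pieces.
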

  \begin{proof}
  Since $T=T_{+}\oplus T_{-}\oplus T_0$ and all these operators $T_{+},T_{-}$ and $T_0$ are $\mathcal{AN}$ operators. We know that $\sigma(T_{+}),\sigma(T_0)$ are countable, as they are positive. Also, $-T_{-}$ is positive $\mathcal {AN}$-operator and hence $\sigma(T_{-})$ is countable. Hence we can conclude that $\sigma(T)=\sigma(T_{+})\cup \sigma(T_{-})\cup \sigma(T_0)$ is countable.
  \end{proof}

Next, we can get the structure of normal $\mathcal{AN}$-operators. Here we use a different approach to  the one used in Theorem \ref{selfadjstructure}.

 \begin{proposition}\label{normalstructure}
 Let $T\in \mathcal{AN}(H)$ be normal with the polar decomposition $T=V|T|$. Then there exists a  compact normal  operator $K$, a  finite rank normal  operator $F\in \mathcal B(H)$ such that
 \begin{enumerate}
 \item $T$ has the representation: \label{normalstructurerepn}  \begin{equation}
  T=K-F+\alpha V
 \end{equation}
with   $KF=0$ and $F^*F\leq \alpha^2I$
\item\label{normalnecessary1} $K^*K+2\alpha \text{Re}(V^*K)\geq 0$
\item\label{normalnecessary2} $V,K,F$ commutes mutually
\item \label{normalnecessary3} if $\alpha>0$, then $V\in \mathcal {AN}(H)$.
\end{enumerate}
 \end{proposition}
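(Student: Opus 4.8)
The plan is to reduce the normal case to the positive case already settled in Theorem \ref{sharpcharacterization}, exploiting the fact that for a normal operator the partial isometry in the polar decomposition commutes with the modulus. First I would observe that since $T$ is normal, Corollary \ref{gramchar} (through its proof) gives that $|T|\in\mathcal{AN}(H)$; as $|T|$ is positive, Theorem \ref{sharpcharacterization} supplies a unique triple $(K_0,F_0,\alpha)$ with $K_0\in\mathcal K(H)$ positive, $F_0\in\mathcal F(H)$ positive, $K_0F_0=F_0K_0=0$ and $0\le F_0\le\alpha I$, such that $|T|=K_0-F_0+\alpha I$.

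The key structural input is that normality of $T$ forces $V|T|=|T|V$; this is the standard commutation property of the polar decomposition of a normal operator, and it also yields that $V$ is a \emph{normal} partial isometry with $V^*V=VV^*=P$, where $P=P_{N(T)^{\bot}}$. Since $K_0$ and $F_0$ are Borel functions of $|T|$, both $V$ and $V^*$ commute with each of $K_0$ and $F_0$, and $P$ commutes with $K_0$ and $F_0$ as well. I would then set $K:=VK_0$ and $F:=VF_0$, so that $T=V|T|=K-F+\alpha V$; here $K$ is compact and $F$ is finite rank, which gives the representation in \ref{normalstructurerepn}.

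It then remains to verify the listed properties by direct computation in the commuting family $\{V,K_0,F_0,P\}$. Mutual commutativity of $V,K,F$ (item \ref{normalnecessary2}) is immediate from $VK_0=K_0V$ and $VF_0=F_0V$, and $KF=FK=0$ follows from $K_0F_0=F_0K_0=0$. Normality of $K$ and $F$ uses $V^*V=VV^*$: for instance $K^*K=K_0V^*VK_0=K_0^2P$ and $KK^*=VK_0^2V^*=PK_0^2$ coincide because $P$ commutes with $K_0$, and the computation for $F$ is identical. For the modulus bound, $F^*F=F_0^2P\le F_0^2\le\alpha^2I$ by $0\le F_0\le\alpha I$. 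For item \ref{normalnecessary1}, since $V^*K=K^*V=K_0P$ is self-adjoint, one gets $\text{Re}(V^*K)=K_0P$, whence $K^*K+2\alpha\,\text{Re}(V^*K)=(K_0^2+2\alpha K_0)P=K_0(K_0+2\alpha I)P\ge0$ using $K_0\ge0$, $\alpha\ge0$, and that the factors commute. Finally, for item \ref{normalnecessary3}, when $\alpha>0$ Proposition \ref{propertiesofAN} applied to $|T|$ shows that $N(T)=N(|T|)=N(V)$ is finite dimensional, so $V$ is a partial isometry with finite dimensional null space, hence an $\mathcal{AN}$-operator by \cite[Proposition 3.14]{carvajalneves}.

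The main obstacle I anticipate is establishing cleanly the commutation relation $V|T|=|T|V$ together with the normality $V^*V=VV^*$ of the partial isometry; these are precisely the features that distinguish this argument from the self-adjoint treatment in Theorem \ref{selfadjstructure}, where $V=P_{+}-P_{-}$ was available explicitly. Once these two facts are secured, every assertion reduces to algebra inside the commutative $*$-algebra generated by $|T|$ and $V$, and the remaining verifications are routine substitutions.
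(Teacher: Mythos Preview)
Your proposal is correct and follows the same overall blueprint as the paper: apply Theorem~\ref{sharpcharacterization} to $|T|$, set $K=VK_0$, $F=VF_0$, and verify the listed properties using the commutation of $V$ with $|T|$ and the normality $V^*V=VV^*$.

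The one substantive difference is how you isolate the separate commutation relations $VK_0=K_0V$ and $VF_0=F_0V$ from the single relation $V|T|=|T|V$. The paper does this by an algebraic trick: since $V$ commutes with $K_0-F_0$, it commutes with $(K_0-F_0)^2=K_0^2+F_0^2=(K_0+F_0)^2$ (using $K_0F_0=0$), hence with $K_0+F_0$, and therefore with each of $K_0,F_0$; Fuglede then yields $V^*K_0=K_0V^*$ and $V^*F_0=F_0V^*$. You instead assert that $K_0$ and $F_0$ are Borel functions of $|T|$. This is true, but you should spell out why: since $K_0,F_0\ge 0$ and $K_0F_0=0$, the equation $|T|-\alpha I=K_0-F_0$ is precisely the Jordan decomposition of the self-adjoint operator $|T|-\alpha I$, so $K_0=f_+(|T|)$ and $F_0=f_-(|T|)$ with $f_\pm(t)=\max(\pm(t-\alpha),0)$. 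With that line added, your functional-calculus route is complete and arguably cleaner than the paper's square trick; it also makes the passage to $V^*$ immediate (take adjoints of $V|T|=|T|V$), avoiding the explicit appeal to Fuglede.
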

\begin{proof}
Proof of (\ref{normalstructurerepn}):
 It is known that  $T$ is normal  if and only if  $V|T|=|T|V$.  Since $|T|\in \mathcal{AN}(H)$, we have $|T|=K_1-F_1+\alpha I$, where $K_1\in \mathcal K(H)$ is positive, $F_1\in \mathcal F(H)$ is positive and $F_1\leq \alpha I$.

 First, we show that $V$ is normal. We have $N(T^*)=N(T)=N(V)$. Hence
 \begin{equation*}
  V^*V=P_{N(V)^{\bot}}=P_{N(T)^{\bot}}=P_{N(T^*)^{\bot}}=P_{\overline{R(T)}}=P_{R(V)}=VV^*.
 \end{equation*}

So, $T=K-F+\alpha V$, where $K=VK_1$ and $F=VF_1$.  Next, we show that $K$ and $F$ are normal.  As $T$ is  normal,  $V$ commutes with $|T|$. Hence
\begin{equation}\label{commutingdifference}
V(K_1-F_1)=(K_1-F_1)V.
\end{equation}
Since $V$ commute with $K_1-F_1$, it also commute with $(K_1-F_1)^2$.  But, $(K_1-F_1)^2=K_1^2+F_1^2=(K_1+F_1)^2$. With this, we can conclude that $V(K_1+F_1)^2=(K_1+F_1)^2V$.  Hence,
\begin{equation}\label{commutingsum}
V(K_1+F_1)=(K_1+F_1)V.
\end{equation}
Thus by Equations (\ref{commutingdifference}) and (\ref{commutingsum}), we can conclude that $VK_1=K_1V$ and $VF_1=F_1V$.  By the Fuglede's theorem we can conclude that $V^*K_1=K_1V^*$ and $V^*F_1=F_1V^*$.
 Next,
\begin{equation*}
K^*K=K_1V^*VK_1=K_1VV^*K_1 =VK_1V^*K_1=VK_1K_1V^*=KK^*.
\end{equation*}
With  similar arguments we can show that $F$ is normal.

Next, we show that $KF=0$. Since $V$ commute with $K_1$ and $F_1$, we have $KF=VK_1VF_1=V^2K_1F_1=0$.

Finally, $F^*F=F_1V^*VF_1\leq \|V\|^2F_1^2\leq \alpha^2I$.

Proof of (\ref{normalnecessary1}): Using the relations $VK_1=K_1V$ and $V^*K_1=K_1V^*$, we get
\begin{align*}
K^*K+\alpha(V^*K+K^*V)&=K_1V^*VK_1+\alpha(V^*VK_1+K_1V^*V)\\
                                          &=V^*V(K_1^2+2\alpha K_1) \\
                                          &=P_{N(V)^{\bot}} (K_1^2+2\alpha K_1)\\
                                          &=K_1^2+2\alpha K_1\\
                                          &\geq 0.
\end{align*}
In the fourth step of the above equations we have used the fact that $P_{N(V)^{\bot}}K_1=P_{R(V)}K_1=P_{R(|T|)}K_1=K_1$.

Proof of (\ref{normalnecessary2}): We have $VK=VVK_1=VK_1V=KV$ and $VF=VVF_1=VF_1V=FV$. Also, $KF=0=FK$.

Proof of (\ref{normalnecessary3}): Note that by applying  (\ref{fdmlnullspace}) of  Proposition \ref{propertiesofAN}  to $|T|$, we can conclude that  $N(|T|)=N(T)=N(V)$ is finite dimensional. Now the conclusion follows by \cite[Proposition 3.14]{carvajalneves}.
\end{proof}
\begin{corollary}\label{ajointpreserving}
Let $T\in \mathcal B(H)$ be normal. Then $T\in \mathcal{AN}(H)$ if and only if $T^*\in \mathcal{AN}(H)$.
\end{corollary}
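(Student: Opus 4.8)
The goal is to show that for a normal operator $T$, membership in $\mathcal{AN}(H)$ is equivalent to $T^* \in \mathcal{AN}(H)$. The plan is to use the structural decomposition just established in Proposition~\ref{normalstructure} together with the symmetry inherent in the hypotheses. The key observation is that $T^*$ is automatically normal whenever $T$ is, since $(T^*)^*T^* = TT^* = T^*T = T^*(T^*)^*$, so both $T$ and $T^*$ live in the same class of operators to which Proposition~\ref{normalstructure} applies.

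First I would assume $T \in \mathcal{AN}(H)$ and invoke Proposition~\ref{normalstructure} to write $T = K - F + \alpha V$, where $K$ is compact normal, $F$ is finite-rank normal, $V$ is the partial isometry from the polar decomposition, $KF = 0$, $F^*F \leq \alpha^2 I$, and by~(\ref{normalnecessary2}) the operators $V, K, F$ commute mutually. Taking adjoints gives $T^* = K^* - F^* + \alpha V^*$. The natural strategy is to show that this is again a decomposition of the form guaranteed by the structure theory, so that Corollary~\ref{gramchar} or a direct appeal to Theorem~\ref{sharpcharacterization} (via $|T^*|$) yields $T^* \in \mathcal{AN}(H)$. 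Alternatively, and more cleanly, I would use Corollary~\ref{gramchar}: $T^* \in \mathcal{AN}(H)$ if and only if $TT^* = (T^*)^*T^* \in \mathcal{AN}(H)$. But for a normal operator $TT^* = T^*T = |T|^2$, and we already know $|T| \in \mathcal{AN}(H)$ because $T \in \mathcal{AN}(H)$ (this is precisely the chain $T \in \mathcal{AN} \Leftrightarrow T^*T = |T|^2 \in \mathcal{AN} \Leftrightarrow |T| \in \mathcal{AN}$ from Corollary~\ref{gramchar} and Theorem~\ref{squarerootpreserving}). Hence $|T|^2 \in \mathcal{AN}(H)$, which gives $TT^* \in \mathcal{AN}(H)$, and Corollary~\ref{gramchar} applied to $T^*$ finishes one direction.

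The reverse direction follows by symmetry: if $T^* \in \mathcal{AN}(H)$, then since $T = (T^*)^*$ and $T^*$ is normal, the same argument applied to $T^*$ in place of $T$ yields $T = (T^*)^* \in \mathcal{AN}(H)$. Thus the whole proof reduces to the single identity $TT^* = T^*T$ for normal operators combined with the already-established equivalence between the $\mathcal{AN}$-property of an operator and that of its Gram operator.

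The main obstacle, such as it is, lies in choosing the cleanest route: the decomposition-based argument requires verifying that the adjoint decomposition $T^* = K^* - F^* + \alpha V^*$ satisfies all the hypotheses of the structure theorem (normality of $K^*, F^*$, the commutation relations, $K^*F^* = 0$, and $(F^*)^*F^* \leq \alpha^2 I$), which is routine but tedious. The Gram-operator route via Corollary~\ref{gramchar} sidesteps all of this, so I would present that one: the only genuine input is the normality identity $TT^* = T^*T$, which turns the Gram operator of $T^*$ into the Gram operator of $T$, and everything else is a direct citation of the preceding corollaries.
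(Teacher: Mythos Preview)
Your chosen route---applying Corollary~\ref{gramchar} to both $T$ and $T^*$ and using the normality identity $T^*T = TT^*$---is exactly the paper's proof. The detour through $|T|$ and Theorem~\ref{squarerootpreserving} is unnecessary (Corollary~\ref{gramchar} already gives $T \in \mathcal{AN}(H) \Leftrightarrow T^*T \in \mathcal{AN}(H)$ directly), but the core argument is identical.
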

\begin{proof}
 We know that $T\in \mathcal{AN}(H)$ if and only if $T^*T\in \mathcal{AN}(H)$ by Corollary \ref{gramchar}. Since $T^*T=TT^*$, by Corollary \ref{gramchar} again, it follows that $TT^*\in \mathcal{AN}(H)$ if and only if $T^*\in \mathcal{AN}(H)$.
\end{proof}
\section{General case}
In this section we prove the structure of  absolutely norm attaining operators defined between two different Hilbert spaces.
\begin{theorem}\label{gencharacterization}
Let $T\in \mathcal {AN}(H_1,H_2)$ with the polar decomposition $T=V|T|$. Then
\begin{equation*}
T=K-F+\alpha V,
\end{equation*}
where $K\in \mathcal K(H_1,H_2), \; F\in \mathcal F(H_1,H_2)$ such that $K^*F=0=KF^*$ and $\alpha^2I\geq F^*F $.
\end{theorem}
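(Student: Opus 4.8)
The plan is to reduce the problem to the positive case already handled in Theorem \ref{sharpcharacterization}, transporting that structure through the polar decomposition $T = V|T|$. First I would apply Corollary \ref{gramchar}: since $T \in \mathcal{AN}(H_1,H_2)$, the positive operator $|T| = (T^*T)^{1/2}$ belongs to $\mathcal{AN}(H_1)$. Feeding $|T|$ into Theorem \ref{sharpcharacterization} yields a triple $(K_1,F_1,\alpha)$ with $K_1 \in \mathcal{K}(H_1)$ positive, $F_1 \in \mathcal{F}(H_1)$ satisfying $0 \le F_1 \le \alpha I$, and $K_1 F_1 = 0$, such that $|T| = K_1 - F_1 + \alpha I$. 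Left-multiplying by $V$ gives
$$T = V|T| = VK_1 - VF_1 + \alpha V,$$
so the natural candidates are $K := VK_1$ and $F := VF_1$. As $V$ is bounded, $K$ is compact and $F$ is finite rank (its range is the $V$-image of the finite-dimensional $R(F_1)$), and $T = K - F + \alpha V$ holds by construction. Note that $K$ and $F$ need not be self-adjoint, which is why only the weaker conditions $K^*F = 0 = KF^*$ are asserted.

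Two of the three required conditions should fall out by direct computation. For $KF^* = 0$: since $F_1$ is self-adjoint, $KF^* = VK_1 F_1^* V^* = VK_1 F_1 V^* = 0$ because $K_1 F_1 = 0$. For the norm bound, I would use the standard identity $V^*V = P_{N(V)^\perp}$ for partial isometries to write $F^*F = F_1 V^*V F_1 = F_1 P_{N(V)^\perp} F_1$; since $P_{N(V)^\perp} \le I$, congruence by the self-adjoint $F_1$ gives $F^*F \le F_1^2$, and $0 \le F_1 \le \alpha I$ forces $F_1^2 \le \alpha^2 I$ by the functional calculus, whence $F^*F \le \alpha^2 I$.

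The delicate step—and the one I expect to be the main obstacle—is $K^*F = 0$, precisely because $V$ is merely a partial isometry, so $V^*V$ is a projection that does not simply cancel. Here $K^*F = K_1 V^*V F_1 = K_1 P_{N(V)^\perp} F_1$, and writing $P_{N(V)^\perp} = I - P_{N(V)}$ gives $K^*F = K_1 F_1 - K_1 P_{N(V)} F_1 = -K_1 P_{N(V)} F_1$. The resolution is the null-space inclusion: $N(V) = N(T) = N(|T|)$, and applying (\ref{nullspaceinclusion}) of Proposition \ref{propertiesofAN} to $|T|$ (valid when $\alpha > 0$) yields $N(|T|) \subseteq N(K_1)$, so that $K_1 P_{N(V)} = 0$ and hence $K^*F = 0$. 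The degenerate case $\alpha = 0$ must be flagged separately: then $F_1 = 0$ and $|T| = K_1$, so $T = K$ is itself compact and all three conditions hold trivially with $F = 0$. Reconciling the partial-isometry projection $V^*V$ with the inclusion $N(|T|) \subseteq N(K_1)$ is thus the heart of the argument.
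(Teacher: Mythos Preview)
Your proposal is correct and follows essentially the same route as the paper: apply Theorem~\ref{sharpcharacterization} to $|T|$, push through $V$ to set $K=VK_1$, $F=VF_1$, and verify the three conditions by the same computations, with the key identity $K^*F = -K_1 P_{N(V)} F_1 = 0$ resting on $N(V)=N(|T|)\subseteq N(K_1)$. If anything you are slightly more careful than the paper, which invokes the inclusion without isolating the degenerate case $\alpha=0$ (where $F_1=0$ makes everything trivial) and bounds $F^*F$ via $\|V^*V\|\le 1$ rather than congruence---but these are cosmetic differences.
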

\begin{proof}
Since $|T|\in \mathcal{AN}(H_1)$  and positive, we have by Theorem \ref{sharpcharacterization}, $|T|=K_1-F_1+\alpha I$, where  the triple $(K_1,F_1,\alpha)$ satisfy conditions in (\ref{orthogonalrepn}) of Theorem \ref{sharpcharacterization}.
Now, $T=K-F+\alpha V$, where $K=VK_1,\; F=VF_1$. Clearly,
\begin{align*}
K^*F=K_1V^*VF_1=K_1P_{N(V)^{\bot}}F_1&=K_1(I-P_{N(V)})F_1\\
                                     &=K_1F_1-K_1P_{N(V)}F_1\\ &=0   \;(\text{since }\; N(V)=N(|T|)\subseteq N(K_1))  .
\end{align*}
Also, clearly, $KF^*=VK_1F_1V^*=0$.

Finally, $F^*F=F_1V^*VF_1\leq  \|V^*V\|F_1^2\leq F_1^2 \leq \alpha^2 I$.
\end{proof}
\begin{proposition}
Let $T\in \mathcal B(H)$ and $U\in \mathcal B(H)$ be unitary such that $T^*=U^*TU$. Then $T\in \mathcal {AN}(H)$ if and only if $T^*\in \mathcal {AN}(H)$.
\end{proposition}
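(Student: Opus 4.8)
The plan is to reduce the statement to a single structural fact: the class $\mathcal{AN}(H)$ is invariant under unitary conjugation. The hypothesis $T^*=U^*TU$ says exactly that $T^*$ is a unitary conjugate of $T$, so once the invariance is established the equivalence is immediate.

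First I would isolate the following lemma: for any $A\in\mathcal{B}(H)$ and any unitary $U\in\mathcal{B}(H)$, one has $A\in\mathcal{AN}(H)$ if and only if $U^*AU\in\mathcal{AN}(H)$. To prove it, fix a nonzero closed subspace $M\subseteq H$. Since $U$ is a bounded bijection with bounded inverse $U^*$, the image $UM$ is again a nonzero closed subspace, and $U$ restricts to a bijection of $S_M$ onto $S_{UM}$ (if $x\in S_M$ then $\|Ux\|=\|x\|=1$, and every $y\in S_{UM}$ is $Ux$ with $\|x\|=\|Ux\|=1$). Using that $U^*$ is an isometry, for each $x\in S_M$ we have $\|(U^*AU)x\|=\|AUx\|$, and taking suprema over $x\in S_M$ gives $\|(U^*AU)|_M\|=\sup_{y\in S_{UM}}\|Ay\|=\|A|_{UM}\|$. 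If $A\in\mathcal{AN}(H)$, then $A|_{UM}$ attains its norm at some $y_0\in S_{UM}$; setting $x_0=U^*y_0\in S_M$ yields $\|(U^*AU)x_0\|=\|Ay_0\|=\|A|_{UM}\|=\|(U^*AU)|_M\|$, so $(U^*AU)|_M$ is norm attaining. As $M$ was an arbitrary nonzero closed subspace, $U^*AU\in\mathcal{AN}(H)$. The reverse implication follows by applying the same argument to the operator $U^*AU$ together with the unitary $U^*$, since $U(U^*AU)U^*=A$.

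With the lemma in hand I would apply it to $A=T$: the relation $U^*TU=T^*$ gives that $T\in\mathcal{AN}(H)$ if and only if $T^*=U^*TU\in\mathcal{AN}(H)$, which is precisely the assertion.

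I do not expect a genuine obstacle; the only point needing care is the routine bookkeeping that $UM$ is a nonzero closed subspace and that both the norm and its attainment transport correctly under the isometries $U$ and $U^*$. In essence the proposition is just the observation that the $\mathcal{AN}$ property is a unitary invariant. One could alternatively try to route through Corollary~\ref{gramchar}, comparing $T^*T$ and $TT^*$, but the conjugation relation $T^*=U^*TU$ does not exhibit $TT^*$ as a transparent unitary conjugate of $T^*T$, so the direct restriction-to-subspaces argument is the cleaner path.
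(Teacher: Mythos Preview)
Your argument is correct and follows the same idea as the paper: the proposition is an immediate consequence of the fact that the class $\mathcal{AN}(H)$ is closed under unitary conjugation. The paper simply invokes this fact from \cite[Theorem 3.5]{carvajalneves}, whereas you supply a clean self-contained proof of it via the bijection $x\mapsto Ux$ between $S_M$ and $S_{UM}$; the approaches are essentially identical.
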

\begin{proof}
This follows by \cite[Theorem 3.5]{carvajalneves}.
\end{proof}

Next, we discuss a possible  converse in the general case.

\begin{theorem}\label{generalconverse}
Let $K\in \mathcal K(H_1,H_2),\; F\in \mathcal F(H_1,H_2)$, $\alpha \geq 0$ and $V\in \mathcal B(H_1,H_2)$ be a partial isometry. Further assume that
\begin{enumerate}
 \item $V\in \mathcal{AN}(H_1,H_2)$
 \item $ K^*K+\alpha (V^*K+K^*V)\geq 0$.
\end{enumerate}
Then
$T:=K-F+\alpha V \in \mathcal {AN}(H_1,H_2)$.
 \end{theorem}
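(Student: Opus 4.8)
The plan is to reduce everything to the positive case via Corollary \ref{gramchar}, which asserts that $T\in \mathcal{AN}(H_1,H_2)$ if and only if $T^*T\in \mathcal{AN}(H_1)$. Thus it suffices to show that the positive operator $T^*T$ is an $\mathcal{AN}$-operator, and for this I intend to produce a decomposition of the form $T^*T=\alpha^2 I+K'+F'$ with $K'$ a positive compact operator and $F'$ a self-adjoint finite rank operator, whereupon Theorem \ref{structurethm2nd} applies directly.

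First I would expand
\begin{equation*}
T^*T=(K^*-F^*+\alpha V^*)(K-F+\alpha V)
\end{equation*}
and regroup the nine resulting terms into three blocks: the compact block $K^*K+\alpha(V^*K+K^*V)$, the scalar block $\alpha^2 V^*V$, and a remaining block collecting every term that contains a factor of $F$. By hypothesis (2) the first block is $\geq 0$, and since $K$ is compact it is a positive compact operator, so it is exactly the candidate for $K'$. Every term in the third block contains a factor of $F\in \mathcal{F}(H_1,H_2)$ and is therefore finite rank, so the third block is a finite rank operator.

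The crux is the scalar block $\alpha^2 V^*V$, since $V$ is only a partial isometry and $V^*V=P_{N(V)^{\bot}}$ need not be the identity. Here I would invoke hypothesis (1): a partial isometry lying in $\mathcal{AN}(H_1,H_2)$ must have either $R(V)$ finite dimensional or $N(V)$ finite dimensional. In the first case $\alpha V$ is finite rank, so $T=K-(F-\alpha V)$ is compact and hence trivially $\mathcal{AN}$; this case is disposed of immediately. In the second case $P_{N(V)}=I-V^*V$ is a finite rank projection, so $\alpha^2 V^*V=\alpha^2 I-\alpha^2 P_{N(V)}$, contributing the genuine scalar $\alpha^2 I$ together with a finite rank correction $-\alpha^2 P_{N(V)}$ that I absorb into $F'$.

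Collecting the pieces gives $T^*T=\alpha^2 I+K'+F'$ with $\alpha^2\geq 0$, $K'=K^*K+\alpha(V^*K+K^*V)$ positive compact, and $F'$ the self-adjoint finite rank operator assembled from the $F$-terms together with $-\alpha^2 P_{N(V)}$; its self-adjointness follows automatically since $T^*T$, $\alpha^2 I$ and $K'$ are self-adjoint. Theorem \ref{structurethm2nd} then yields $T^*T\in \mathcal{AN}(H_1)$, and Corollary \ref{gramchar} finishes the proof. I expect the only real obstacle to be the correct handling of the deficiency $I-V^*V$, which is precisely where the two hypotheses on $V$ enter: hypothesis (1) forces this deficiency to be finite rank (or else trivializes the problem by making $T$ compact), and hypothesis (2) guarantees positivity of the compact block $K'$.
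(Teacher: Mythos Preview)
Your proposal is correct and follows essentially the same approach as the paper's own proof: both reduce to $T^*T$ via Corollary~\ref{gramchar}, expand $T^*T$, isolate the positive compact block $K^*K+\alpha(V^*K+K^*V)$ using hypothesis~(2), use the $\mathcal{AN}$ dichotomy for partial isometries to write $\alpha^2 V^*V=\alpha^2 I-\alpha^2 P_{N(V)}$ (or dispose of the compact case), and conclude via Theorem~\ref{structurethm2nd}. The only cosmetic difference is that the paper separates the case $\alpha=0$ at the outset, whereas you absorb it into the ``$\alpha V$ finite rank'' branch.
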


 \begin{proof}
  If $\alpha=0$, then $T\in \mathcal{K}(H_1,H_2)$. Hence $T\in \mathcal {AN}(H_1,H_2)$. Next assume that $\alpha>0$. We prove this case by showing $T^*T\in \mathcal{AN}(H_1)$. By a simple calculation we can get
$   T^*T=\mathcal K-\mathcal F +\alpha^2 P_{N(V)^{\bot}}$,   where,
   \begin{equation*}
   \mathcal K= K^*K+\alpha (V^*K+K^*V),\quad  \mathcal F= F^*F-F^*K-K^*F-\alpha (V^*F+F^*V).
  \end{equation*}
Since  $V\in \mathcal{AN}(H_1,H_2)$, either $N(V)$ or $N(V)^{\bot}$ is finite dimensional. If $N(V)^{\bot}$ is finite dimensional, then $T^*T\in \mathcal K(H_1)$. Hence $T\in \mathcal K(H_1,H_2)$.

If $N(V)$ is finite dimensional, then $T^*T=\mathcal K-(\mathcal F-\alpha^2 P_{N(V)})+\alpha^2I$. Note that the operator $\mathcal F-\alpha^2 P_{N(V)}$ is a finite rank self-adjoint operator.
Hence $T^*T\in \mathcal{AN}(H_1)$ by Theorem \ref{structurethm2nd}. Now the conclusion follows by Corollary \ref{gramchar}.
 \end{proof}

\begin{corollary}
  Suppose that $K\in \mathcal K(K)$, $ F\in \mathcal F(H)$  are  normal and $V\in \mathcal B(H)$ is a normal partial isometry such that $V,F, K$ commute mutually. Let $\alpha\geq 0$. Then
\begin{enumerate}
\item\label{normalityofsum} $T:=K-F+\alpha V$ is normal and

\item\label{normalANconverse} if  $K^*K+2\alpha V^*K \geq 0$ and $V\in \mathcal {AN}(H)$, then $T\in \mathcal {AN}(H)$.
\end{enumerate}
\end{corollary}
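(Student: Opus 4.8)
The plan is to treat the two assertions separately: the normality of $T$ follows from the fact that a finite linear combination of mutually commuting normal operators is again normal, while the $\mathcal{AN}$-property is obtained by reducing the hypotheses to those of Theorem \ref{generalconverse}.

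For (\ref{normalityofsum}), I would first invoke Fuglede's theorem, exactly as in the proof of Proposition \ref{normalstructure}: since $K$, $F$ and $V$ are normal and commute pairwise, each of them commutes with the adjoints of the others, so the family $\{K,F,V,K^*,F^*,V^*\}$ is mutually commuting. Consequently $T=K-F+\alpha V$ and $T^*=K^*-F^*+\alpha V^*$ both lie in the commutative subalgebra of $\mathcal B(H)$ generated by this family, whence $T^*T=TT^*$ by expanding both products term by term. Note that this part uses neither the positivity hypothesis nor the assumption $V\in\mathcal{AN}(H)$.

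For (\ref{normalANconverse}), the goal is to verify the two hypotheses of Theorem \ref{generalconverse}, namely $V\in\mathcal{AN}(H_1,H_2)$ and $K^*K+\alpha(V^*K+K^*V)\geq 0$. The first is assumed outright. For the second, the key observation is that the assumed inequality $K^*K+2\alpha V^*K\geq 0$ asserts, in particular, that this operator is self-adjoint; since $K^*K$ is self-adjoint and $\alpha\geq 0$, in the case $\alpha>0$ this forces $V^*K$ to be self-adjoint, that is, $V^*K=K^*V$. Substituting this identity gives $K^*K+\alpha(V^*K+K^*V)=K^*K+2\alpha V^*K\geq 0$, which is precisely condition (2) of Theorem \ref{generalconverse}, and that theorem then yields $T\in\mathcal{AN}(H)$. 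The degenerate case $\alpha=0$ is handled directly: then $T=K-F$ is compact and hence lies in $\mathcal{AN}(H)$.

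The step I expect to be the crux is recognizing that the two positivity conditions---$K^*K+2\alpha V^*K\geq 0$ in the corollary and $K^*K+\alpha(V^*K+K^*V)\geq 0$ in Theorem \ref{generalconverse}---coincide precisely because the stated inequality secretly encodes the self-adjointness of $V^*K$. This identity $V^*K=K^*V$ is \emph{not} automatic from normality and commutativity of $V$ and $K$ alone (via the simultaneous spectral representation one checks that a commuting normal pair need not satisfy it), so the bridge between the two formulations comes entirely from interpreting ``$\geq 0$'' as positivity, which in a complex Hilbert space already entails self-adjointness.
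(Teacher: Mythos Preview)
Your proof is correct. Part (\ref{normalityofsum}) matches the paper's approach exactly: both rely on the fact (ultimately via Fuglede) that a linear combination of mutually commuting normal operators is normal. For part (\ref{normalANconverse}) you take a slightly different---and in fact cleaner---route than the paper. The paper applies Fuglede to deduce $V^*K=KV^*$ and then appeals to Theorem \ref{generalconverse}; you instead observe that the hypothesis $K^*K+2\alpha V^*K\geq 0$ (for $\alpha>0$) forces $V^*K$ to be self-adjoint, i.e.\ $K^*V=V^*K$, which is exactly the identity needed to rewrite $K^*K+2\alpha V^*K$ as $K^*K+\alpha(V^*K+K^*V)$ and invoke Theorem \ref{generalconverse}. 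Your remark that Fuglede alone yields only $V^*K=KV^*$ rather than $V^*K=K^*V$ is well taken: the self-adjointness hidden in the positivity assumption is what actually bridges the two formulations, and your argument makes this step explicit where the paper's one-line sketch does not.
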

\begin{proof}
To prove (\ref{normalityofsum}) we observe that if $A$ and $B$ are commuting normal operators, then $A+B$ is normal (see \cite[Page 342, Exercise 12]{rudin} for details). By this observation it follows that $T$ is normal.

To prove (\ref{normalANconverse}), since $VK=KV$, by Fuglede's theorem \cite[Page 315]{rudin}, $V^*K=KV^*$. With this observation and Theorem \ref{generalconverse}, the conclusion follows.
\end{proof}

\begin{corollary}
Suppose that $K\in \mathcal K(H)$, $ F\in \mathcal F(H)$  are  self-adjoint  and $V\in \mathcal B(H)$ is a self-adjoint,  partial isometry such that
\begin{itemize}
\item[(a)] $V\in \mathcal{AN}(H)$
\item[(b)] $K^2+2\alpha (VK)\geq 0$.
\end{itemize}
Then $T:=K-F+\alpha V$ is self-adjoint and $\mathcal{AN}$-operator.
\end{corollary}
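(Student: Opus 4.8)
The plan is to read off self-adjointness by inspection and to obtain the $\mathcal{AN}$-property by specializing the already-proved Theorem \ref{generalconverse} to these hypotheses. Self-adjointness is immediate: since $K$, $F$, $V$ are self-adjoint and $\alpha$ is real (indeed $\alpha \geq 0$, as throughout these results), we have $T^* = K^* - F^* + \alpha V^* = K - F + \alpha V = T$, so no real work is needed there.

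For the $\mathcal{AN}$-property I would invoke Theorem \ref{generalconverse} with $H_1 = H_2 = H$. Its structural hypotheses hold verbatim in this setting: $K \in \mathcal{K}(H)$, $F \in \mathcal{F}(H)$, $\alpha \geq 0$, and $V$ is a partial isometry. So it remains only to check its two numbered conditions. Condition (1), that $V \in \mathcal{AN}(H)$, is precisely assumption (a). Condition (2) is $K^*K + \alpha(V^*K + K^*V) \geq 0$, which upon substituting $K = K^*$ and $V = V^*$ becomes $K^2 + \alpha(VK + KV) \geq 0$.

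The step that needs attention is reconciling this symmetrized inequality with the stated assumption (b), namely $K^2 + 2\alpha(VK) \geq 0$, and the key is that $VK = KV$. First I would note that for the inequality (b) to make sense the operator $K^2 + 2\alpha VK$ must be self-adjoint; since $K^2$ is self-adjoint this forces $2\alpha VK$ to be self-adjoint, and as $(VK)^* = K^*V^* = KV$ this gives $\alpha(VK - KV) = 0$. Thus for $\alpha > 0$ we have $VK = KV$, so $VK + KV = 2VK$ and condition (2) coincides exactly with (b). With both hypotheses of Theorem \ref{generalconverse} thereby verified, it yields $T \in \mathcal{AN}(H)$. The degenerate case $\alpha = 0$ is handled separately and trivially, since then $T = K - F$ is compact and hence automatically $\mathcal{AN}$.

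The main obstacle --- really the only nonroutine point --- is this matching of positivity conditions: recognizing that the commutativity $VK = KV$ is already encoded in the self-adjointness implicit in (b), which is exactly what permits the unsymmetrized term $2\alpha VK$ to stand in for $\alpha(VK + KV)$ appearing in Theorem \ref{generalconverse}. Once that observation is made, the conclusion follows by direct citation of that theorem, and the self-adjointness of $T$ completes the statement.
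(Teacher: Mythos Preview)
Your proposal is correct and takes essentially the same approach as the paper, which simply states that the result ``directly follows by Theorem \ref{generalconverse}.'' Your version is in fact more explicit than the paper's: you spell out why assumption (b) matches condition (2) of Theorem \ref{generalconverse} by extracting the commutativity $VK=KV$ from the implicit self-adjointness in (b), a point the paper leaves to the reader.
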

\begin{proof}
The proof directly follows by Theorem \ref{generalconverse}.
\end{proof}
 \begin{definition}\cite[page 349]{jbconway}\label{leftsemifredholm}
Let $T\in \mathcal B(H_1,H_2)$. Then $T$ is called left \textit{semi-Fredholm} if there exists a $B\in \mathcal B(H_2,H_1)$ and $K\in \mathcal K(H_1)$ such that $BT=K+I$ and \textit{right semi-Fredholm} if there exists a $A\in \mathcal B(H_2,H_1)$ and $K'\in \mathcal K(H_2)$ such that $TA=K'+I$.

If $T$ is both left semi-Fredholm and right semi-Fredholm, then $T$ is called Fredholm.
\end{definition}
\begin{remark}\label{fredholmadjoint}
Note that $T$ is left semi-Fredholm if and only if $T^*$ is right semi-Fredholm (see \cite[section 2, page 349]{jbconway} for details).
\end{remark}

\begin{corollary}\label{semifredholm}
Let $T\in \mathcal{AN}(H_1,H_2)$ but not compact. Then $T$ is left-semi-Fredholm.
\end{corollary}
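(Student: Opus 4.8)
The plan is to exhibit an explicit left parametrix for $T$ built from $T^*$, using the structure of $T^*T$. By Corollary~\ref{gramchar}, since $T\in\mathcal{AN}(H_1,H_2)$, the positive operator $T^*T=|T|^2$ lies in $\mathcal{AN}(H_1)$. Hence Theorem~\ref{sharpcharacterization} applies to $T^*T$ and yields a unique triple $(K',F',\beta)$ with $K'$ positive compact, $F'$ positive finite rank, $K'F'=0$, $F'\le\beta I$, and
\[
T^*T=K'-F'+\beta I,
\]
where $\beta=m_e(T^*T)$.

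First I would rule out $\beta=0$. If $\beta=0$, then by the remark following Theorem~\ref{sharpcharacterization} we have $F'=0$, so $T^*T=K'$ is compact; consequently $|T|=(T^*T)^{\frac{1}{2}}$ is compact, and therefore $T=V|T|$ is compact, contradicting the hypothesis that $T$ is not compact. Thus $\beta>0$.

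With $\beta>0$ in hand, the decomposition can be rewritten as
\[
T^*T=\beta I-(F'-K'),
\]
where $G:=F'-K'\in\mathcal K(H_1)$. Setting $B:=\beta^{-1}T^*\in\mathcal B(H_2,H_1)$, I obtain
\[
BT=\beta^{-1}T^*T=I-\beta^{-1}G=I+K,\qquad K:=-\beta^{-1}G\in\mathcal K(H_1).
\]
This is precisely the defining relation in Definition~\ref{leftsemifredholm} for $T$ to be left semi-Fredholm, with bounded left factor $B$ and compact error $K$.

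The only genuine content is the step $\beta>0$; once positivity of the essential minimum modulus is secured, the parametrix $\beta^{-1}T^*$ is forced and the verification is immediate. The main obstacle is therefore the clean translation of ``not compact'' into ``$m_e(T^*T)>0$'', which I handle through the structure and uniqueness statement of Theorem~\ref{sharpcharacterization} together with the fact that the positive square root of a compact positive operator is compact.
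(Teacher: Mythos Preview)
Your proof is correct and follows essentially the same strategy as the paper: apply the structure theorem for positive $\mathcal{AN}$-operators, use the non-compactness hypothesis to force the scalar to be strictly positive, and read off an explicit left parametrix.

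The only difference is in packaging. The paper works with $|T|$ via the polar decomposition $T=V|T|$: since $|T|=V^*T$ and $|T|\in\mathcal{AN}(H_1)$, Theorem~\ref{sharpcharacterization} gives $V^*T=K-F+\alpha I$ with $\alpha>0$ (else $T$ would be compact), so $B=\alpha^{-1}V^*$ is the parametrix. You instead work with $T^*T$ directly and take $B=\beta^{-1}T^*$. Your route avoids the polar decomposition altogether and gives the more canonical parametrix $T^*$; the paper's route stays closer to the running theme of representing $|T|$ via Theorem~\ref{sharpcharacterization}. Both reach the conclusion in one line once the scalar is known to be positive, and your justification of $\beta>0$ (via compactness of the square root of a positive compact operator) is in fact more explicit than the paper's.
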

\begin{proof}
Let $T=V|T|$ be the polar decomposition of $T$. Then $|T|=V^*T$. As,  $|T|\in \mathcal{AN}(H_1)$, by Theorem \ref{sharpcharacterization}, there exists a  triple $(K,F,\alpha)$ satisfying conditions in Theorem \ref{sharpcharacterization}, such that
$V^*T=K-F+\alpha I$. Let $K^{'}=K-F$. Then $V^*T=K^{'}+\alpha I$. By Definition \ref{leftsemifredholm}, it follows that $T$ is left-semi-Fredholm.
\end{proof}

\bibliographystyle{plain}

\end{document}